\newtheorem{theorem}{Theorem}[section]
\newtheorem{lemma}[theorem]{Lemma}
\newtheorem{proposition}[theorem]{Proposition}
\newtheorem{construction}[theorem]{Construction}
\newtheorem{sublemma}{}[theorem]
\newtheorem{conjecture}[theorem]{Conjecture}
\newenvironment{subproof}[1][\proofname]{%
  \begin{proof}[#1]%
}{%
  \end{proof}%
}
\numberwithin{equation}{section}
\theoremstyle{definition}
\theoremstyle{remark}
\newcommand{\seq}[1]{[#1]}
\newcommand{\ba}{\backslash}
\newcommand{\ttp}{$(s,2s,t,2t)$-property }
\DeclareMathOperator{\cl}{cl}
\begin{document}
\title[Generalized Spikes]{Generalized spikes with circuits and cocircuits of different cardinalities}

\author[N.~Brettell]{Nick Brettell}
\address{School of Mathematics and Statistics\\
  Victoria University of Wellington\\
New Zealand}
\email{nick.brettell@vuw.ac.nz}
\author[K.~Grace]{Kevin Grace}
\address{Department of Mathematics\\
  Vanderbilt University\\
Nashville, Tennessee}
\email{kevin.m.grace@vanderbilt.edu}

\subjclass{05B35}
\date{\today}

\begin{abstract}
  We consider matroids with the property that every subset of the ground set of size $s$ is contained in a $2s$-element circuit and every subset of size $t$ is contained in a $2t$-element cocircuit. We say that such a matroid has the \emph{$(s,2s,t,2t)$-property}.  A matroid is an \emph{$(s,t)$-spike} if there is a partition of the ground set into pairs such that the union of any $s$ pairs is a circuit and the union of any $t$ pairs is a cocircuit. Our main result is that all sufficiently large matroids with the $(s,2s,t,2t)$-property are $(s,t)$-spikes, generalizing a 2019 result that proved the case where $s=t$. We also present some properties of $(s,t)$-spikes.
\end{abstract}

\maketitle

\section{Introduction}

For integers $s$, $u$, $t$, and $v$, with $u \ge s \ge 1$ and $v \ge t \ge 1$, a matroid~$M$ has the \emph{$(s,u,t,v)$-property} if every $s$-element subset of $E(M)$ is contained in a circuit of size~$u$, and every $t$-element subset of $E(M)$ is contained in a cocircuit of size~$v$.
Matroids with this property appear regularly in the matroid theory literature: for example, wheels and whirls have the $(1,3,1,3)$-property, and (tipless) spikes have the $(2,4,2,4)$-property.
Note that $M$ has the $(s,u,t,v)$-property if and only if $M^*$ has the $(t,v,s,u)$-property.
Brettell, Campbell, Chun, Grace, and Whittle~\cite{bccgw2019} studied such matroids, and showed that if $u<2s$ or $v<2t$, then there are only finitely many matroids with the $(s,u,t,v)$-property~\cite[Theorem 3.3]{bccgw2019}.
On the other hand, in the case that $s=t$ and $u=v=2t$, any sufficiently large matroid with the $(s,u,t,v)$-property is a member of a class of structured matroids referred to as \emph{$t$-spikes}.  In particular, when $t=2$, this is the class typically known simply as \emph{(tipless) spikes}.

Our focus in this paper is also on the case where $u=2s$ and $v=2t$, but we drop the requirement that $s=t$. For positive integers $s$ and $t$, an \emph{$(s,t)$-spike} is a matroid on at least $2\max\{s,t\}$ elements whose ground set has a partition $(S_1,S_2,\ldots,S_n)$ into pairs such that the union of every set of $s$ pairs is a circuit and the union of every set of $t$ pairs is a cocircuit. The following is our main result:

\begin{theorem}
\label{mainthm}
There exists a function $f : \mathbb{N}^2 \rightarrow \mathbb{N}$ such that, if $M$ is a matroid with the \ttp and $|E(M)| \ge f(s,t)$, then $M$ is an $(s,t)$-spike.
\end{theorem}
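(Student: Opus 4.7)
The plan is to extract a canonical pairing of $E(M)$ from the $2s$-circuit structure and then verify that this pairing witnesses the $(s,t)$-spike property. The argument proceeds in three stages.

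First, I would define an equivalence relation $\sim$ on $E(M)$ by declaring $e \sim f$ whenever, for every $2s$-element circuit $C$ of $M$, one has $e \in C$ if and only if $f \in C$. This is manifestly an equivalence relation, and in any $(s,t)$-spike its classes are precisely the designated pairs $S_1,\ldots,S_n$, so this is the natural candidate. A dual relation $\sim^*$ defined via $2t$-cocircuits will also be useful; one should eventually show $\sim \,=\, \sim^*$.

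Second, I would show that every $\sim$-class has size exactly $2$ once $|E(M)|$ is sufficiently large. For the upper bound, suppose $|[e]|=k$; fix an $(s-1)$-subset $A$ disjoint from $[e]$, apply the $(s,2s)$-property to $A\cup\{e\}$, and observe that the resulting $2s$-circuit must contain all of $[e]$, giving the crude bound $k\le s+1$. To sharpen this to $k=2$, I would use a Ramsey/counting argument: if $k\ge 3$, then very many $2s$-circuits are forced to share a common triple from $[e]$, and orthogonality with the abundant $2t$-cocircuits supplied by the $(t,2t)$-property yields contradictory parity/incidence constraints when $|E(M)|$ is large. For the lower bound $k\ge 2$, I would pick an element $e$ and, using the $(s,2s)$-property iteratively over varying $(s-1)$-subsets, find a single element $f\ne e$ that co-occurs with $e$ in all the $2s$-circuits witnessing the property; a pigeonhole/Ramsey step on the (bounded-size) sets $C\setminus(A\cup\{e\})$ produces such an $f$.

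Third, I would verify the spike axioms for the partition $(S_1,\ldots,S_n)$ given by $\sim$-classes. For any $s$ classes $S_{i_1},\ldots,S_{i_s}$, pick a transversal; by the $(s,2s)$-property it lies in a $2s$-circuit $C$, and by the definition of $\sim$ each class is either entirely inside $C$ or disjoint from it, so $C$ is a union of classes of total size $2s$, forcing $C=S_{i_1}\cup\cdots\cup S_{i_s}$. The cocircuit condition is handled either by repeating the argument with $\sim^*$ in place of $\sim$ and then proving $\sim\,=\,\sim^*$, or by a direct dual argument leveraging that the $\sim$-classes are already known to be pairs.

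The main obstacle is the second stage, and in particular the interaction between the circuit and cocircuit structures when $s\ne t$. Proving both that classes have size at least $2$ (so a well-defined partner exists for every element) and at most $2$ (so the class is exactly a pair), uniformly across all of $E(M)$ rather than merely on a large Ramsey-style subset, is where orthogonality between the guaranteed $2s$-circuits and $2t$-cocircuits must be exploited carefully; the function $f(s,t)$ will emerge from the Ramsey bounds used here. A secondary subtlety, absent from the $s=t$ case treated in~\cite{bccgw2019}, is reconciling the circuit-defined relation $\sim$ with its cocircuit-defined counterpart $\sim^*$; I expect this to follow once both are shown to partition $E(M)$ into pairs, but the asymmetry $s\ne t$ means it must be argued rather than assumed.
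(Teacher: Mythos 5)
Your proposal takes a genuinely different route from the paper, but it has a critical gap at its core.

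The relation $\sim$ you define is very strong: $e \sim f$ requires $f$ to lie in \emph{every} $2s$-element circuit containing $e$. The $(s,2s,t,2t)$-property, however, only guarantees that each $s$-subset containing $e$ lies in \emph{some} $2s$-circuit; it gives no control over the totality of $2s$-circuits through $e$. Your argument for $|[e]|\ge 2$ proposes a pigeonhole/Ramsey step over the circuits witnessing the property, producing an $f$ that co-occurs with $e$ in many of \emph{those} circuits. That does not establish $e\sim f$: nothing prevents some other $2s$-circuit through $e$ (not one arising from the pigeonhole family) from omitting $f$. A priori, the $\sim$-classes may well all be singletons. In fact, the assertion that $\sim$-classes have size exactly two is essentially equivalent to the statement that every $2s$-circuit of $M$ is a union of pairs, and that is a \emph{consequence} of $M$ being an $(s,t)$-spike (cf.\ \cref{l:circuits}); trying to prove it directly from the $(s,2s,t,2t)$-property is circular. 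The $k\le 2$ argument via ``contradictory parity/incidence constraints'' is similarly not substantiated: orthogonality of a $2t$-cocircuit meeting $[e]$ in one or two elements with the $2s$-circuits through $e$ does not visibly produce a contradiction when $|[e]|=3$.

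The paper sidesteps this by never attempting to read the pairing off $E(M)$ globally. It instead uses Ramsey theory (via \cref{lem:disjoint}, \cref{setup}, \cref{lem:payoff}, and the iterative step \cref{iterramsey}) to locate a \emph{local} structured piece --- a large $s$-echidna or $t$-coechidna --- and then invokes the propagation lemma \cref{lem:swamping}, which shows (via \cref{lem:coechidna} and \cref{lem:rep-orthog}, using orthogonality) that any sufficiently large $s$-echidna extends to a partition of all of $E(M)$ into pairs forming both an $s$-echidna and a $t$-coechidna. Both the bootstrapping (find a large echidna first) and the propagation (swamp the rest of the ground set) are absent from your proposal, and I do not see how to supply them within the $\sim$-framework. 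Your ``secondary subtlety'' of reconciling $\sim$ with $\sim^*$ also has no independent resolution in your sketch; in the paper this is exactly what \cref{lem:coechidna} provides, and it too relies on already having a large echidna in hand.
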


\noindent
This proves the conjecture of Brettell et al.~\cite[Conjecture~1.2]{bccgw2019}. 


Our approach is essentially the same as in \cite{bccgw2019}, but some care is required to generalize the argument.
We note also that \cref{modcut} corrects an erroneous lemma \cite[Lemma 6.6]{bccgw2019}.

This paper is one in a developing series on matroids with the $(s,u,t,v)$-property.
First, Miller~\cite{miller2014} studied matroids with the $(2,4,2,4)$-property, proving the specialization of \cref{mainthm} to the case where $s=t=2$.  As previously mentioned, Brettell et al.~\cite{bccgw2019} considered the more general case where $s=t$ and $u=v=2t$, for any $t \ge 1$.
Oxley, Pfeil, Semple, and Whittle considered the case where $s=2$, $u=4$, $t=1$, and $v \in \{3,4\}$, showing that a sufficiently large $v$-connected matroid with the $(2,4,1,v)$-property is isomorphic to $M(K_{v,n})$ for some $n$~\cite{pfeil}.
A ``cyclic'' analogue of the $(s,u,t,v)$-property has also been considered, where a cyclic ordering $\sigma$ is imposed on $E(M)$, and only sets that appear consecutively with respect to $\sigma$ and have size~$s$ (or size~$t$) need appear in a circuit of size $u$ (or a cocircuit of size $v$, respectively).  The case where $s = u-1$ and $t = v-1$ and $s=t$ was considered by Brettell, Chun, Fife, and Semple~\cite{bcfs2019}; whereas Brettell, Semple, and Toft dropped the requirement that $s=t$~\cite{bst2022}.

This series of papers has been motivated by problems involving matroid connectivity. The well-known Wheels-and-Whirls Theorem of Tutte~\cite{tutte1966} states that wheels and whirls (which have the $(1,3,1,3)$-property) are the only $3$-connected matroids with no elements that can be either deleted or contracted to retain a $3$-connected matroid. Similarly, spikes (which have the $(2,4,2,4)$-property) are the only $3$-connected matroids on at least $13$ elements that have no triangles, no triads, and no pairs of elements that can be either deleted or contracted to preserve $3$-connectivity~\cite{williams2015}.

The following conjecture was stated as \cite[Conjecture 1.3]{bccgw2019}.  The case where $t=2$ was proved by Williams~\cite{williams2015}.
\begin{conjecture}
\label{conj:old}
  There exists a function $f : \mathbb{N} \rightarrow \mathbb{N}$ such that if $M$ is a $(2t-1)$-connected matroid with no circuits or cocircuits of size $2t-1$, and $|E(M)| \ge f(t)$, then either
  \begin{enumerate}
    \item there exists a $t$-element set $X \subseteq E(M)$ such that either $M/X$ or $M \ba X$ is $(t+1)$-connected, or
    \item $M$ is a $(t,t)$-spike.
  \end{enumerate}
\end{conjecture}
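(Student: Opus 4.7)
The plan is to reduce the conjecture to \cref{mainthm} with $s=t$. It suffices to show that any sufficiently large matroid $M$ satisfying the hypotheses of the conjecture but violating alternative (i) must have the $(t,2t,t,2t)$-property; \cref{mainthm} then produces a $(t,t)$-spike, which is alternative (ii). Setting $f(t)$ in terms of the function from \cref{mainthm} evaluated at $(t,t)$ (with some slack for the uncrossing arguments below), the task becomes: for every $t$-subset $X \subseteq E(M)$, produce a $2t$-element circuit and a $2t$-element cocircuit containing $X$. By duality we focus on the circuit.

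Fix a $t$-subset $X \subseteq E(M)$. Since $M/X$ is not $(t+1)$-connected, there is a partition $(A,B)$ of $E(M) \setminus X$ with $\min(|A|,|B|) \ge t$ and $\lambda_{M/X}(A) \le t-1$. A direct rank calculation gives
\[
  \lambda_M(A) = \lambda_{M/X}(A) + r_M(X) - \bigl(r_M(A \cup X) - r_M(A)\bigr),
\]
and the symmetric identity holds with the roles of $A$ and $B$ swapped. Combining these with the lower bound on $\lambda_M(A)$ coming from $(2t-1)$-connectivity of $M$ (after enlarging $A$ via submodular uncrossing so that both $|A|$ and $|B|$ are well above $2t$), together with $r_M(X) \le t$, constrains how $X$ sits relative to the separation: in the extremal case, $X$ is independent and lies in the closures of both $A$ and $B$. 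From such a position, iterated circuit elimination on witnessing circuits from each side of the separation yields a circuit $C$ containing $X$. The lower bound $|C| \ge 2t$ is immediate from $(2t-1)$-connectivity combined with the hypothesis forbidding $(2t-1)$-circuits; a matching upper bound $|C| \le 2t$ must come from a sharper exploitation of the separation structure. The dual argument on $M \ba X$, working in $M^*$ and replacing closures by coclosures, delivers the required $2t$-cocircuit, and \cref{mainthm} then concludes the proof.

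The step I expect to be the main obstacle is the upper bound $|C| \le 2t$. Knowing only that elementwise closures contain $X$ does not in general produce a common circuit of prescribed size, so one must genuinely exploit the global hypothesis that \emph{every} $t$-subset, not just the fixed $X$, admits no $(t+1)$-connected contraction or deletion. This is essentially the generalisation, from pairs to $t$-subsets, of the Seymour-style splitter argument used by Williams in the case $t=2$, and its extension to arbitrary $t$ will likely require a careful analysis of how the family of ``bad'' $t$-separations interact across all choices of $X$---reminiscent of, but substantially more intricate than, the flower-and-fan arguments appearing in \cite{bccgw2019}.
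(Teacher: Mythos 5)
This statement is \cref{conj:old}, which the paper explicitly presents as an \emph{open conjecture} (it is \cite[Conjecture 1.3]{bccgw2019}, with only the case $t=2$ resolved by Williams). The paper offers no proof of it, so there is no ``paper's own proof'' to compare against; the paper only verifies that sufficiently large $(t,t)$-spikes satisfy the hypotheses and fail alternative (i), which shows the conjecture is tight but does nothing toward proving it. So your task was, in effect, to settle an open problem, and you should be aware that any gap you leave is not a presentation issue but is likely where the genuine mathematical difficulty of the conjecture lives.

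On the substance of your sketch: the reduction ``(i) fails $\Rightarrow$ the $(t,2t,t,2t)$-property $\Rightarrow$ $(t,t)$-spike by \cref{mainthm}'' is a reasonable strategy and is consistent with how the conjecture sits in the paper. But you have correctly identified that the reduction has a real hole, and it is exactly where you say it is: given a low-order separation $(A,B)$ of $M/X$, you obtain that $X$ lies in the closures of both $A$ and $B$, and circuit elimination produces \emph{some} circuit through $X$, but nothing in your argument bounds its size above by $2t$. The $(2t-1)$-connectivity and the exclusion of $(2t-1)$-circuits give the lower bound essentially for free, but the matching upper bound is the entire content of the conjecture and you supply no mechanism for it. Additionally, the preliminary step is not as clean as stated: a failure of $(t+1)$-connectivity of $M/X$ only gives a $k$-separation for some $k \le t$ with $\min(|A|,|B|) \ge k$, not necessarily $\ge t$, and ``enlarging $A$ via submodular uncrossing'' against what is only a separation of $M/X$ (not of $M$) needs justification; in the extreme case one side could have as few as $k$ elements and the rank identities you write down degenerate. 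And as you note, the argument must exploit that \emph{every} $t$-subset fails (i), not just the fixed $X$; your sketch uses only the single $X$, which is almost certainly insufficient. In short: the high-level plan is plausible, but as written it is a restatement of the conjecture's difficulty rather than a proof, and you should not regard it as close to complete.
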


Indeed, sufficiently large $(t,t)$-spikes are $(2t-1)$-connected matroids~\cite[Lemma~6.5]{bccgw2019}, they have no circuits or cocircuits of size $(2t-1)$~\cite[Lemma~6.3]{bccgw2019}, and for every $t$-element subset $X \subseteq E(M)$, neither $M/X$ nor $M \ba X$ is $(t+1)$-connected. Optimistically, we offer the following generalization of \cref{conj:old}.

\begin{conjecture}
\label{conj:new}
  There exists a function $f : \mathbb{N}^2 \rightarrow \mathbb{N}$ such that if $M$ is a matroid with no circuits of size at most $2s-1$, no cocircuits of size at most $2t-1$, the matroid $M$ is $(2\min\{s,t\}-1)$-connected, and $|E(M)| \ge f(s,t)$, then either
  \begin{enumerate}
    \item there exists an $s$-element set $X \subseteq E(M)$ such that $M/X$ is $(s+1)$-connected,
    \item there exists a $t$-element set $X \subseteq E(M)$ such that $M \ba X$ is $(t+1)$-connected, or
    \item $M$ is an $(s,t)$-spike.
  \end{enumerate}
\end{conjecture}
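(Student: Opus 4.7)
The plan is to reduce the conjecture to \cref{mainthm} by showing that whenever outcomes (i) and (ii) both fail, the matroid $M$ has the \ttp; outcome (iii) then follows from \cref{mainthm} provided $|E(M)|$ is at least the bound $f_0(s,t)$ guaranteed there. I would therefore set $f(s,t) := f_0(s,t)$ and suppose, for a contradiction, that (i), (ii), and (iii) all fail.

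Under these assumptions, for every $s$-subset $X$ of $E(M)$ the matroid $M/X$ fails to be $(s+1)$-connected, and dually for every $t$-subset $Y$ the matroid $M \backslash Y$ fails to be $(t+1)$-connected. The heart of the argument is the following key lemma: the failure of $(s+1)$-connectivity of $M/X$ forces $X$ to lie in a circuit of $M$ of size exactly $2s$. Unpacking the connectivity hypothesis, there is a partition $(A,B)$ of $E(M) \backslash X$ with $|A|, |B| \geq s+1$ and $r_{M/X}(A) + r_{M/X}(B) - r(M/X) \leq s-1$. Using the identity $r_{M/X}(A) = r_M(A \cup X) - r_M(X)$ and combining with the $(2\min\{s,t\}-1)$-connectivity of $M$ (which bounds $\lambda_M$ from below on suitably large separations), one obtains tight constraints on the ranks of $X$, $A \cup X$, and $B \cup X$ in $M$. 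Because $M$ has no circuits of size at most $2s-1$, the degenerate configurations in which $X$ is not contained in any circuit of size $2s$ are ruled out, and the tight rank inequality forces $X \cup C$ to be a circuit of size $2s$ for some $s$-subset $C$ of $A$ or $B$. Dualising, and using the failure of outcome (ii) in place of outcome (i), shows that every $t$-subset lies in a cocircuit of size $2t$, so $M$ has the \ttp. Applying \cref{mainthm} then contradicts the failure of outcome (iii).

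The main obstacle is the key lemma. For $s = t = 2$ the analogous step was carried out by Williams~\cite{williams2015} via a detailed case analysis exploiting the well-developed structure theory of $3$-separations in $3$-connected matroids. Extending that analysis to arbitrary $s$ and $t$, and in particular extracting a $2s$-circuit under only the relatively weak hypothesis of $(2\min\{s,t\}-1)$-connectivity (as opposed to $(2\max\{s,t\}-1)$-connectivity) when $s$ and $t$ differ significantly, appears to require new ideas. One plausible strategy is to first bootstrap the connectivity of $M$ using the failure of both outcomes (i) and (ii) together with segment--cosegment style results, and only then extract the desired circuit or cocircuit via submodularity and the prohibition on small circuits and cocircuits.
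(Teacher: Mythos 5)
The statement you address is Conjecture~\ref{conj:new}, which the paper explicitly leaves open: the authors ``optimistically'' offer it as a generalization of \cref{conj:old}, whose only established case is $t=2$ (Williams). There is therefore no proof in the paper to compare against, and your task was to supply one.

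Your proposal identifies the natural reduction --- show that the simultaneous failure of outcomes (i) and (ii) forces the \ttp, then apply \cref{mainthm} --- and this is almost certainly the right framework. But the argument has a genuine gap at precisely the step you flag. The ``key lemma,'' that failure of $(s+1)$-connectivity of $M/X$ forces $X$ to lie in a $2s$-element circuit, is asserted rather than proved. Your sketch passes from $\lambda_{M/X}$ to $\lambda_M$, invokes the lower bound on $\lambda_M$ coming from $(2\min\{s,t\}-1)$-connectivity, and then declares that ``the tight rank inequality forces $X \cup C$ to be a circuit of size $2s$.'' A small separation of $M/X$, however, only certifies a large rank interaction between $X$ and one side of the separation; extracting from this a circuit containing all of $X$ that lives in $X$ together with a small set, and moreover has size exactly $2s$, requires controlling both the circuit's existence and its cardinality. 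The hypotheses forbid circuits of size at most $2s-1$ but give no upper bound, so even if a circuit through $X$ is produced, nothing in the sketch pins it to size $2s$. The asymmetric case $s \neq t$ is especially delicate: with only $(2\min\{s,t\}-1)$-connectivity available, when $s > t$ the lower bound on $\lambda_M$ is too weak for the envisioned rank count, and the ``bootstrap the connectivity'' idea you gesture at is not carried out. In short, this is a plausible plan of attack rather than a proof, and --- as you yourself acknowledge --- the central lemma appears to require new ideas beyond those in the paper or in Williams's $s=t=2$ argument.
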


\cref{sec:Preliminaries} recalls some terminology and a Ramsey-theoretic result used later in the paper. In \cref{sec:echidnas}, we recall the definition of echidnas from~\cite{bccgw2019} and show that every matroid with the $(s,2s,t,2t)$-property and having a sufficiently large $s$-echidna is an $(s,t)$-spike. In \cref{sec:t2t}, we prove \cref{mainthm}. Finally, \cref{sec:tspikeprops} describes some properties of $(s,t)$-spikes, as well as a construction that allows us to build an $(s,t+1)$-spike from an $(s,t)$-spike.

\section{Preliminaries}
\label{sec:Preliminaries}

Our notation and terminology follows Oxley~\cite{oxbook}.
We refer to the fact that a circuit and a cocircuit cannot intersect in exactly one element as ``orthogonality''.
A set $S_1$ \emph{meets} a set $S_2$ if $S_1 \cap S_2 \neq \emptyset$.
We denote $\{1,2,\dotsc,n\}$ by $\seq{n}$, and, for positive integers $i < j$, we denote $\{i,i+1,\dotsc,j\}$ by $[i,j]$.
We denote the set of positive integers by $\mathbb{N}$.

In order to prove \cref{mainthm}, we will use some hypergraph Ramsey Theory~\cite{ramsey1930}. Recall that a hypergraph is \emph{$k$-uniform} if every hyperedge has size~$k$.

\begin{theorem}[Ramsey's Theorem for $k$-uniform hypergraphs]
  \label{hyperramsey}
  For positive integers $k$ and $n$, there exists an integer $r_k(n)$ such that if $H$ is a $k$-uniform hypergraph on $r_k(n)$ vertices, then $H$ has either a clique on $n$ vertices, or a stable set on $n$ vertices.
\end{theorem}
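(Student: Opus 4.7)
The statement is the classical hypergraph Ramsey theorem of Erdős and Rado, and my plan is to prove it by induction on~$k$. The base case $k=1$ is immediate pigeonhole: on $2n-1$ vertices, either at least $n$ are hyperedges (yielding a clique of size~$n$) or at least $n$ are not (yielding a stable set), so $r_1(n)=2n-1$ suffices. The base case $k=2$ is the graph Ramsey theorem, which follows from the standard double induction via the recursion $R(a,b)\le R(a-1,b)+R(a,b-1)$ with $R(a,1)=R(1,a)=1$.

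For the inductive step with $k\ge 3$, I would use the Erdős--Rado stepping-up construction. Starting from a sufficiently large vertex set $V_0=V(H)$, pick any $v_1\in V_0$ and form the $(k-1)$-uniform hypergraph $H_1$ on $V_0\setminus\{v_1\}$ whose hyperedges are the $(k-1)$-subsets $S$ with $S\cup\{v_1\}\in E(H)$. Apply the inductive hypothesis to $H_1$ to obtain a homogeneous subset $V_1\subseteq V_0\setminus\{v_1\}$: either every $(k-1)$-subset of $V_1$ is a hyperedge of $H_1$ (label $v_1$ \emph{red}) or none is (label $v_1$ \emph{blue}). Iterate: pick $v_2\in V_1$, form the analogous $H_2$ on $V_1\setminus\{v_2\}$, apply the inductive hypothesis to get $V_2\subseteq V_1\setminus\{v_2\}$ and a colour for~$v_2$, and continue until a sequence $v_1,\dotsc,v_N$ with colours $c_1,\dotsc,c_N$ has been built, where $N\ge 2n$.

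By pigeonhole, some colour -- say red -- appears at least $n$ times, so there are indices $i_1<\dotsb<i_n$ with $c_{i_j}=$~red for every $j$. I claim $\{v_{i_1},\dotsc,v_{i_n}\}$ is a clique of $H$: for any $k$-subset $\{v_{i_{j_1}},\dotsc,v_{i_{j_k}}\}$ with $j_1<\dotsb<j_k$, the vertices $v_{i_{j_2}},\dotsc,v_{i_{j_k}}$ all lie in $V_{i_{j_1}}$ by nestedness, so the $(k-1)$-subset $\{v_{i_{j_2}},\dotsc,v_{i_{j_k}}\}$ is a hyperedge of $H_{i_{j_1}}$ (because $v_{i_{j_1}}$ is red), which means $\{v_{i_{j_1}},\dotsc,v_{i_{j_k}}\}\in E(H)$. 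The blue case yields a stable set by the same argument. Setting $r_k(n)$ so that after $N=2n$ successive applications of $r_{k-1}$ to shrink $V$ we still have $|V_N|\ge 1$ gives the required bound. The only real obstacle is the bookkeeping for this iteration: at each step one needs $|V_i|\ge r_{k-1}(|V_{i+1}|+1)$, so $r_k(n)$ ends up being an iterated (tower-type) composition of $r_{k-1}$, but finiteness is all that is needed and it follows from the inductive hypothesis.
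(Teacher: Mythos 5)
Your proof is correct; it is the standard inductive ``pick a pivot, restrict to a homogeneous subset, iterate, and finish by pigeonhole'' argument for hypergraph Ramsey, and all the key points (the nestedness $V_0\supseteq V_1\supseteq\cdots$ guaranteeing $v_b\in V_a$ whenever $a<b$, the translation between hyperedges of $H_i$ and hyperedges of $H$ containing $v_i$, and the tower-type bound coming from iterating $r_{k-1}$) are in place. The paper itself gives no proof of this theorem --- it is stated as a known result with a citation to Ramsey's 1930 paper --- so there is nothing in the source to compare your argument against; you have simply supplied a correct standalone proof of a classical theorem that the authors take for granted. Two cosmetic remarks: $N=2n-1$ already suffices for the final pigeonhole (you use $N\ge 2n$, which is harmless but slightly wasteful), and the recursion is more naturally written $|V_i|\ge r_{k-1}(|V_{i+1}|)+1$ rather than $|V_i|\ge r_{k-1}(|V_{i+1}|+1)$, though the latter is stronger and hence also valid. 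Also, what you call the ``Erd\H{o}s--Rado stepping-up construction'' is really the classical upper-bound induction; the Erd\H{o}s--Rado stepping-up lemma usually refers to the lower-bound construction that lifts a $k$-uniform colouring to a $(k+1)$-uniform one.
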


\section{Echidnas and \texorpdfstring{$(s,t)$}{(s,t)}-spikes}
\label{sec:echidnas}

Recall that $M$ is an $(s,t)$-spike if there is a partition of $E(M)$ into pairs such that the union of any $s$ pairs is a circuit and the union of any $t$ pairs is a cocircuit.
In this section, we prove a sufficient condition for $M$ to be an $(s,t)$-spike.  Namely, we prove as \cref{lem:swamping} that if $M$ has the $(s,2s,t,2t)$-property, and a subset of $E(M)$ can be partitioned into $u$ pairs such that the union of any $t$ pairs is a circuit, then, when $u$ is sufficiently large, $M$ is an $(s,t)$-spike.  Conforming with \cite{bccgw2019}, we call such a partition a $t$-echidna, as defined below.

  Let $M$ be a matroid.
  A $t$-\emph{echidna} of order $n$ is a partition $(S_1,\ldots, S_n)$ of a subset of $E(M)$ such that 
  \begin{enumerate}
    \item $|S_i|=2$ for all $i \in \seq{n}$, and 
    \item $\bigcup_{i \in I}S_i$ is a circuit for all $I \subseteq \seq{n}$ with $|I|=t$.
  \end{enumerate}
  For $i \in \seq{n}$, we say $S_i$ is a \emph{spine}.
  We say $(S_1,\ldots,S_n)$ is a \emph{$t$-coechidna} of $M$ if $(S_1,\ldots,S_n)$ is a $t$-echidna of $M^*$.

Let $(S_1,\dotsc,S_n)$ be a $t$-echidna of a matroid $M$.
If $(S_1,\dotsc,S_m)$ is a $t$-echidna of $M$, for some $m \ge n$, we say that $(S_1,\dotsc,S_n)$ \emph{extends} to $(S_1,\dotsc,S_m)$.
We say that $\pi=(S_1,\dotsc,S_n)$ is \emph{maximal} if $\pi$ extends only to $\pi$.

Note that a matroid~$M$ is an $(s,t)$-spike if there exists a partition $\pi=(A_1,\ldots,A_m)$ of $E(M)$ such that $\pi$ is an $s$-echidna and a $t$-coechidna, for some $m\geq\max\{s,t\}$. In this case, we say that the $(s,t)$-spike~$M$ has \emph{order~$m$}, we call $\pi$ the \emph{associated partition} of the $(s,t)$-spike~$M$, and we say that $A_i$ is an \emph{arm} of the $(s,t)$-spike for each $i \in \seq{m}$. An $(s,t)$-spike with $s=t$ is also called a \emph{$t$-spike}.
Note that if $M$ is an $(s,t)$-spike, then $M^*$ is a $(t,s)$-spike.

Throughout this section, we assume that $s$ and $t$ are positive integers.

\begin{lemma}
  \label{lem:coechidna}
  Let $M$ be a matroid with the $(s,2s,t,2t)$-property.
  If $M$ has an $s$-echidna $(S_1,\ldots, S_n)$, where $n\geq s+2t-1$, then $(S_1,\ldots, S_n)$ is also a $t$-coechidna of $M$.
\end{lemma}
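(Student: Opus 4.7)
The plan is to fix an arbitrary $t$-subset $T\subseteq\seq{n}$ and prove that $\bigcup_{i\in T}S_i$ is a cocircuit. To find a candidate, pick one element $x_i$ from each spine $S_i$ with $i\in T$, and apply the \ttp to the $t$-element set $X=\{x_i:i\in T\}$: this yields a cocircuit $C^*$ of size $2t$ containing $X$. The goal is to show $C^*=\bigcup_{i\in T}S_i$.

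The core tool is orthogonality against the circuits supplied by the echidna. Let $a_i=|C^*\cap S_i|\in\{0,1,2\}$. For every $s$-subset $I\subseteq\seq{n}$, the set $C_I=\bigcup_{i\in I}S_i$ is a circuit, so by orthogonality
\[
\sum_{i\in I}a_i\;=\;|C^*\cap C_I|\;\neq\;1.
\]
The key step is to rule out $a_i=1$. Suppose for contradiction that $a_j=1$ for some $j$. Since $\sum_i a_i\le|C^*|=2t$ and $a_j=1$, at most $2t-1$ indices $k\neq j$ can have $a_k\ge 1$. Hence the number of indices $k\in\seq{n}\setminus\{j\}$ with $a_k=0$ is at least
\[
(n-1)-(2t-1)\;=\;n-2t\;\ge\;s-1,
\]
using the hypothesis $n\ge s+2t-1$. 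Taking $j$ together with $s-1$ such zero indices produces an $s$-subset $I$ with $\sum_{i\in I}a_i=1$, contradicting the orthogonality display above.

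Hence $a_i\in\{0,2\}$ for every $i$. Since $x_i\in C^*\cap S_i$ for each $i\in T$, we get $a_i=2$ for all $i\in T$, which already gives $\bigcup_{i\in T}S_i\subseteq C^*$. Comparing cardinalities, $|C^*|=2t=\bigl|\bigcup_{i\in T}S_i\bigr|$, so $C^*=\bigcup_{i\in T}S_i$ and this union is therefore a cocircuit. As $T$ was an arbitrary $t$-subset of $\seq{n}$, the partition $(S_1,\ldots,S_n)$ is a $t$-coechidna.

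The main obstacle is the combinatorial step ruling out $a_i=1$: it is where the precise bound $n\ge s+2t-1$ is needed, and it drives the whole argument. The rest is straightforward bookkeeping once the orthogonality constraint $\sum_{i\in I}a_i\neq1$ is in hand, and this approach avoids any need to iterate over multiple choices of the representative set $X$.
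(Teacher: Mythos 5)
Your proof is correct and takes essentially the same approach as the paper's: both fix a representative $t$-set $X$, take the $2t$-element cocircuit $C^*$ it lies in, and use orthogonality against the $2s$-element echidna circuits together with the bound $n \ge s+2t-1$ to rule out $|C^*\cap S_i|=1$, forcing $C^*=\bigcup_{i\in T}S_i$ by a cardinality count. The only difference is cosmetic: you phrase the key counting step in terms of the quantities $a_i=|C^*\cap S_i|$ (first using $|C^*|=2t$ to produce many zeros, then hitting orthogonality), whereas the paper runs the same count in the opposite direction (first using orthogonality to bound how many spines $C^*$ can avoid, then contradicting $|C^*|=2t$).
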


\begin{proof}
  Suppose $M$ has an $s$-echidna $(S_1,\ldots, S_n)$ with $n \ge s+2t-1$, and
  let $S_i=\{x_i,y_i\}$ for each $i \in [n]$. We show, for every $t$-element subset $J$ of $[n]$, that $\bigcup_{j \in J} S_j$ is a cocircuit. Without loss of generality, let $J=[t]$. By the $(s,2s,t,2t)$-property, $\{x_1,\ldots,x_{t}\}$ is contained in a $2t$-element cocircuit~$C^*$. Suppose for a contradiction that $C^*\neq\bigcup_{j \in J} S_j$.
  Then there is some $i \in [t]$ such that $y_i\notin C^*$. Without loss of generality, say $y_1\notin C^*$.
  
  Let $I$ be an $(s-1)$-element subset of $[t+1,n]$.
  For any such $I$, the set $S_1 \cup \bigcup_{i \in I} S_i$ is a circuit that meets $C^*$. By orthogonality, $\bigcup_{i \in I} S_i$ meets $C^*$. 
  Thus, $C^*$ avoids at most $s-2$ of the $S_i$'s for $i \in [t+1,n]$. In fact, as $C^*$ meets each $S_i$ with $i \in [t]$, the cocircuit~$C^*$ avoids at most $s-2$ of the $S_i$'s for $i \in [n]$. Thus $|C^*| \ge n-(s-2) \ge (s+2t-1) -(s-2) =2t+1 > 2t$, a contradiction.
  
  Therefore, we conclude that $C^*=\bigcup_{j \in J} S_j$, and the result follows.
\end{proof}



\sloppy
\begin{lemma}
  \label{lem:rep-orthog}
  Let $M$ be a matroid with the $(s,2s,t,2t)$-property, and let $(S_1,\ldots, S_n)$ be an $s$-echidna of $M$ with $n\geq\max\{s+2t,2s+t\}-1$.
  
  \begin{itemize}
  \item[(i)]
  Let $I$ be an $(s-1)$-subset of $[n]$. For $z\in E(M)-\bigcup_{i \in I}S_i$, there is a $2s$-element circuit containing $\{z\} \cup \bigcup_{i \in I}S_i$.
  \item[(ii)]
  Let $I$ be a $(t-1)$-subset of $[n]$. For $z\in E(M)-\bigcup_{i \in I}S_i$, there is a $2t$-element cocircuit containing $\{z\} \cup \bigcup_{i \in I}S_i$.
\end{itemize}
\end{lemma}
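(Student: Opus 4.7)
The plan is to model both parts on the proof of \cref{lem:coechidna}: use the \ttp to produce an initial circuit (for (i)) or cocircuit (for (ii)) containing an $s$- or $t$-element ``probe'' set, then use orthogonality against the remaining spines to force this object to swallow all of $\bigcup_{i \in I} S_i$. Write $S_i=\{x_i,y_i\}$ for each $i$, and, without loss of generality, take $I=[s-1]$ in (i) and $I=[t-1]$ in (ii).

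For (i), the probe set is $\{z,x_1,\ldots,x_{s-1}\}$; by the \ttp it lies in a $2s$-element circuit $C$. Suppose, for a contradiction, that some $y_i$ (say $y_1$) is not in $C$. Because $n \ge s+2t-1$, \cref{lem:coechidna} applies, so $(S_1,\ldots,S_n)$ is a $t$-coechidna. For any $(t-1)$-subset $J$ of $[s,n]$, the set $S_1 \cup \bigcup_{j \in J} S_j$ is then a cocircuit meeting $C$ in $x_1$; by orthogonality, $C$ meets $\bigcup_{j \in J} S_j$. Hence $C$ avoids at most $t-2$ of the spines $S_j$ with $j \in [s,n]$, giving $|C| \ge s + (n-s+1)-(t-2) = n-t+3$. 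Combining with $|C|=2s$ and $n \ge 2s+t-1$ yields $2s \ge 2s+2$, a contradiction. Thus $C \supseteq \{z\} \cup \bigcup_{i \in I} S_i$, as required.

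Part (ii) is the dual computation but only requires the given $s$-echidna structure. The probe set $\{z,x_1,\ldots,x_{t-1}\}$ lies in a $2t$-cocircuit $C^*$ by the \ttp. If some $y_i$, say $y_1$, is missing from $C^*$, then for each $(s-1)$-subset $J$ of $[t,n]$ the set $S_1 \cup \bigcup_{j\in J} S_j$ is a circuit (direct from the $s$-echidna definition), meeting $C^*$ in $x_1$; orthogonality forces $C^*$ to meet $\bigcup_{j \in J} S_j$. So $C^*$ avoids at most $s-2$ of the $S_j$ with $j \in [t,n]$, giving $|C^*| \ge t + (n-t+1)-(s-2) = n-s+3$, which contradicts $|C^*|=2t$ once $n \ge s+2t-1$.

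There is no deep obstacle; the main thing to watch is that the two inequalities in $n \ge \max\{s+2t,2s+t\}-1$ play different roles. The bound $n \ge 2s+t-1$ is what makes the orthogonality count in (i) actually yield a contradiction, while $n \ge s+2t-1$ is needed twice: once to invoke \cref{lem:coechidna} inside the proof of (i), and once to close out the counting in (ii). A small bit of bookkeeping ensures the $(t-1)$- or $(s-1)$-subsets $J$ used for orthogonality can always be chosen disjoint from the index set~$I$, which is why the count is over $[s,n]$ (respectively $[t,n]$) rather than all of $[n]$.
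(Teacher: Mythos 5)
Your proposal is correct and does prove the lemma, but it takes a noticeably different route from the paper's. Where the paper proves (i) by fixing a single $(t-1)$-set $J$ disjoint from $C$ (possible because $|C|=2s$ and $n\geq 2s+t-1$) and then, for each $i\in I$, using the cocircuit $C_i^*=S_i\cup\bigcup_{j\in J}S_j$ to show directly via orthogonality that $y_i\in C$, you instead argue by contradiction, supposing $y_1\notin C$ and counting how many spines $C$ can avoid. Both work; the paper's version avoids the contradiction and is a touch cleaner. For (ii), the paper simply dualizes (i) after invoking \cref{lem:coechidna} to get the $t$-coechidna structure, while you re-run the counting argument in the cocircuit setting; your remark that (ii) does not itself need \cref{lem:coechidna} is a correct and mildly interesting observation, though the paper's duality shortcut is shorter.

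One small arithmetic slip to flag: in (i) you claim $|C| \ge s + (n-s+1) - (t-2) = n-t+3$, and similarly $|C^*|\ge n-s+3$ in (ii). This implicitly assumes $z$ lies outside every spine $S_j$ with $j\geq s$ (respectively $j\geq t$). The hypothesis only excludes $z$ from $\bigcup_{i\in I}S_i$, so $z$ may well lie in one of the remaining spines, in which case its contribution is already counted among the spines $C$ meets and the bound drops to $n-t+2$ (respectively $n-s+2$). The contradiction still goes through --- $2s\ge n-t+2\ge 2s+1$ and $2t\ge n-s+2\ge 2t+1$ --- so the proof is unaffected, but the claimed constant is off by one, and the corrected version shows the hypotheses $n\ge 2s+t-1$ and $n\ge s+2t-1$ are used tightly, with no slack.
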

\fussy

\begin{proof}
  First we prove (i). For $i \in [n]$, let $S_i=\{x_i,y_i\}$.
  By the $(s,2s,t,2t)$-property, there is a $2s$-element circuit~$C$ containing $\{z\} \cup \{x_i : i \in I\}$.
  Let $J$ be a $(t-1)$-element subset of $[n]$ such that $C$ and $\bigcup_{j \in J}S_j$ are disjoint (such a set exists since $|C|=2s$ and $n \ge 2s+t-1$).
  For $i \in I$, let $C^*_i=S_i \cup \bigcup_{j \in J} S_j$, and observe that $x_i \in C^*_i \cap C$, and $C^*_i \cap C \subseteq S_i$.
  By \cref{lem:coechidna}, $(S_1,\dotsc,S_n)$ is a $t$-coechidna as well as an $s$-echidna; therefore, $C^*_i$ is a cocircuit.
  Now, for each $i \in I$, orthogonality implies that $|C^*_i \cap C| \ge 2$, and hence $y_i \in C$.
  So $C$ contains $\{z\} \cup \bigcup_{i \in I}S_i$, as required.
  
  Now, to prove (ii), recall that $(S_1,\dotsc,S_n)$ is a $t$-coechidna by Lemma \cref{lem:coechidna}. Therefore, (ii) follows by (i) and duality.
\end{proof}

\begin{lemma}
  \label{lem:swamping}
  Let $M$ be a matroid with the $(s,2s,t,2t)$-property.
  If $M$ has an $s$-echidna $\pi=(S_1,\ldots, S_n)$, where $n\geq\max\{s+2t-1,2s+t-1,3s+t-3\}$, then $(S_1,\ldots, S_n)$ extends to a partition of $E(M)$ that is both an $s$-echidna and a $t$-coechidna. 
\end{lemma}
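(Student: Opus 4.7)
The plan is to let $\pi^* = (S_1, \ldots, S_m)$ be a maximal $s$-echidna extending $\pi$, and to show by contradiction that $\pi^*$ must partition $E(M)$. Since $m \geq n$, both parts of \cref{lem:rep-orthog} apply to $\pi^*$, and \cref{lem:coechidna} ensures $\pi^*$ is already a $t$-coechidna, so the conclusion will follow once $\pi^*$ covers $E(M)$. Suppose for contradiction that some $z \in E(M) \setminus \bigcup_{i} S_i$ exists. For each $(s-1)$-subset $I \subseteq \seq{m}$, \cref{lem:rep-orthog}(i) supplies a $2s$-circuit that I write as $C_I = \{z\} \cup \bigcup_{i \in I} S_i \cup \{z_I\}$; dually, for each $(t-1)$-subset $J \subseteq \seq{m}$, \cref{lem:rep-orthog}(ii) supplies a $2t$-cocircuit $C^*_J = \{z\} \cup \bigcup_{j \in J} S_j \cup \{z^*_J\}$. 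The goal is to show that $z_I$ is independent of $I$, so that the common value $z'$ produces a new spine $\{z, z'\}$ contradicting maximality of $\pi^*$.

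A preliminary step checks that $z_I \notin \bigcup_{i} S_i$. Were $z_I \in S_{j^*}$ for some $j^* \notin I$, then picking a $t$-subset $J$ of $\seq{m}$ with $j^* \in J$ and $J \cap I = \emptyset$, the $2t$-cocircuit $\bigcup_{j \in J} S_j$ would meet $C_I$ only inside $S_{j^*}$, so orthogonality forces both elements of $S_{j^*}$ into $C_I$, contradicting $|C_I| = 2s$. The same argument with $s$ and $t$ swapped yields $z^*_J \notin \bigcup_{i} S_i$.

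The crux is to show $z_I = z^*_J$ whenever $I \cap J = \emptyset$. Under this disjointness, each spine $S_i$ with $i \in I$ is disjoint from $C^*_J$ (since $i \notin J$ and $z, z^*_J$ lie outside the existing spines), and symmetrically each $S_j$ with $j \in J$ is disjoint from $C_I$. Thus $C_I \cap C^*_J \subseteq \{z, z_I\} \cap \{z, z^*_J\}$, which certainly contains $z$; if $z_I \neq z^*_J$, the intersection has cardinality~$1$, contradicting orthogonality. Given any two $(s-1)$-subsets $I, I'$ of $\seq{m}$, the hypothesis $m \geq n \geq 2s + t - 3$ lets me pick a $(t-1)$-subset $J$ disjoint from $I \cup I'$, giving $z_I = z^*_J = z_{I'}$. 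Setting $z' := z_I$ for this common value and noting $z' \neq z$ and $z' \notin \bigcup_i S_i$, the tuple $(S_1, \ldots, S_m, \{z, z'\})$ is a partition of a subset of $E(M)$ in which $\{z, z'\} \cup \bigcup_{i \in I} S_i = C_I$ is a $2s$-circuit for every $(s-1)$-subset $I$. Hence it is an $s$-echidna properly extending $\pi^*$, contradicting maximality and completing the proof.

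The main obstacle is the identification $z_I = z^*_J$: the rest is essentially bookkeeping, but this step depends on recognising that playing the $2s$-circuits of \cref{lem:rep-orthog}(i) against the $2t$-cocircuits of \cref{lem:rep-orthog}(ii) with \emph{disjoint} index sets makes the intersection collapse to exactly $\{z\}$ plus at most one partner element, at which point orthogonality forces the partners to coincide. Once this is done, the uniqueness of the partner is automatic (any other $2s$-circuit witnessing the same $I$ gives a $z_I$ also forced to equal the same $z^*_J$), and the extension argument closes without any further case analysis.
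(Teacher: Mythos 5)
Your proof is correct, and it takes a genuinely different (and arguably cleaner) route than the paper's. Both proofs fix a maximal $s$-echidna $(S_1,\dotsc,S_m)$, suppose some $z$ lies outside, and deduce a contradiction with maximality. The paper gets there by two bootstrapping passes: it first picks one circuit $C$ for $I=[s-1]$, proves that the partition $(\{z,z'\},S_s,\dotsc,S_m)$ \emph{with the first $s-1$ spines dropped} is a $t$-coechidna, upgrades it to an $s$-echidna via the dual of \cref{lem:coechidna}, and then uses circuits from this intermediate echidna to run the argument a second time on $(\{z,z'\},S_1,\dotsc,S_m)$. You instead work with the whole family of circuits $C_I$ (over all $(s-1)$-subsets $I$) and cocircuits $C^*_J$ (over all $(t-1)$-subsets $J$) given by \cref{lem:rep-orthog}, show the partner elements $z_I$ and $z^*_J$ lie outside $X$, and then observe that disjointness of $I$ and $J$ collapses $C_I\cap C^*_J$ to $\{z\}$ unless $z_I=z^*_J$, so orthogonality forces $z_I=z^*_J$. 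Sweeping $J$ over a common disjoint index set for $I$ and $I'$ then ties all the $z_I$ to a single $z'$, and $(S_1,\dotsc,S_m,\{z,z'\})$ is an $s$-echidna in one pass, with no intermediate reduced partition. A small dividend of your route: it only ever uses $m\ge 2s+t-3$ and the bounds needed for \cref{lem:coechidna,lem:rep-orthog}, so the hypothesis $n\ge\max\{s+2t-1,\,2s+t-1\}$ would have sufficed. The extra term $3s+t-3$ in the stated hypothesis is there precisely because the paper applies the dual of \cref{lem:coechidna} to the shorter partition $(\{z,z'\},S_s,\dotsc,S_m)$ of order $m-s+2$, a step your argument avoids.
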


\begin{proof}
  Let $\pi'=(S_1, \dotsc, S_m)$ be a maximal $s$-echidna with $X=\bigcup_{i = 1}^{m} S_i\subseteq E(M)$.
  Suppose for a contradiction that $X\neq E(M)$.
  Since $\pi'$ is maximal, $m\geq n\geq s+2t-1$.
  Therefore, by Lemma \ref{lem:coechidna}, $\pi'$ is a $t$-coechidna.
  
  Let $z\in E(M)-X$.
  By Lemma \ref{lem:rep-orthog}, there is a $2s$-element circuit $C = (\bigcup_{i \in [s-1]} S_i)\cup \{z,z'\}$ for some $z'\in E(M)$.
  We claim that $z'\notin X$.
  Towards a contradiction, suppose that $z'\in S_k$ for some $k\in [s,m]$.
  Let $J$ be a $t$-element subset of $[s,m]$ containing $k$.
  Then, since $(S_1,\dotsc,S_m)$ is a $t$-coechidna, $\bigcup_{j \in J}S_j$ is a cocircuit that contains $z'$.
  Now, this cocircuit intersects the circuit~$C$ in a single element $z'$, contradicting orthogonality.
  Thus, $z'\notin X$, as claimed.

  We next show that $(\{z,z'\}, S_{s}, S_{s+1}, \ldots, S_m)$ is a $t$-coechidna.
  Since $\pi'$ is a $t$-coechidna, it suffices to show that $\{z,z'\} \cup \bigcup_{i \in I}S_i$ is a cocircuit for each $(t-1)$-element subset~$I$ of $[s,m]$.
  Let $I$ be such a set.
  \Cref{lem:rep-orthog} implies that there is a $2t$-element cocircuit~$C^*$ of $M$ containing $\{z\} \cup \bigcup_{i\in I}S_i$.
  By orthogonality, $|C\cap C^*|>1$. Therefore, $z'\in C^*$. Thus, $(\{z,z'\}, S_{s}, S_{s+1}, \ldots, S_m)$ is a $t$-coechidna.
  Since this $t$-coechidna has order $1+m-(s-1)\geq n-s+2\geq2s+t-1$, the dual of \cref{lem:coechidna} implies that $(\{z,z'\}, S_{s}, S_{s+1}, \dotsc, S_m)$ is also an $s$-echidna.
  
  Next we show that $(\{z,z'\}, S_1, S_2, \dotsc, S_m)$ is a $t$-coechidna. Let $I$ be a $(t-1)$-element subset of $[m]$. We claim that $\{z,z'\} \cup \bigcup_{i \in I}S_i$ is a cocircuit.
  Let $J$ be an $(s-1)$-element subset of $[s,m]-I$.
  Then $C=\{z,z'\} \cup \bigcup_{j \in J}S_j$ is a circuit since $(\{z,z'\}, S_{s}, S_{s+1}, \dotsc, S_m)$ is an $s$-echidna.
  By \cref{lem:rep-orthog}, there is a $2t$-element cocircuit~$C^*$ containing $\{z\} \cup \bigcup_{i \in I}S_i$.
  By orthogonality between $C$ and $C^*$, we have $z'\in C^*$.
  Since $I$ was arbitrarily chosen, $(\{z,z'\}, S_1, S_2, \dotsc, S_m)$ is a $t$-coechidna.
  By the dual of \cref{lem:coechidna}, it is also an $s$-echidna, contradicting the maximality of $(S_1,\dotsc,S_m)$.
\end{proof}

\section{Matroids with the \texorpdfstring{$(s,2s,t,2t)$}{(s,2s,t,2t)}-property}
\label{sec:t2t}

In this section, we prove that every sufficiently large matroid with the \ttp is an $(s,t)$-spike. We will show that a sufficiently large matroid with the \ttp has a large $s$-echidna or $t$-coechidna;
it then follows, by \cref{lem:swamping}, that the matroid is an $(s,t)$-spike. As in the previous section, we assume that $s$ and $t$ are positive integers.

\begin{lemma}
  \label{lem:rank-t}
  Let $M$ be a matroid with the $(s,2s,t,2t)$-property, and let $X\subseteq E(M)$.
  \begin{enumerate}
    \item If $r(X)<s$, then $X$ is independent.\label{rt1}
    \item If $r(X)=s$, then $M|X\cong U_{s,|X|}$ and $|X|<s+2t$.\label{rt2}
  \end{enumerate}
\end{lemma}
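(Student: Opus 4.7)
The plan is to exploit the key fact that the $(s,2s,t,2t)$-property forces every circuit of $M$ to have size strictly greater than $s$. Indeed, if $C$ were a circuit with $|C|\leq s$, extend $C$ to any $s$-subset $Y\subseteq E(M)$ (taking $Y=C$ when $|C|=s$); the property produces a $2s$-circuit $C'$ with $Y\subseteq C'$, hence $C\subsetneq C'$, contradicting the fact that one circuit cannot properly contain another. For part (i), this is essentially the whole argument: if $X$ is dependent it contains a circuit $C$ with $|C|=r(C)+1\le r(X)+1\le s$, which the above rules out, so $X$ must be independent.

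For part (ii), I would first establish the uniform structure and then bound $|X|$. Given $r(X)=s$, any $s$-subset $Y\subseteq X$ has $r(Y)\leq s$; if $r(Y)<s$ then part (i) makes $Y$ independent, while if $r(Y)=s$ then $|Y|=r(Y)$ forces independence too, so every $s$-subset of $X$ is independent. Any $(s+1)$-subset $Y\subseteq X$ is dependent because $r(X)=s$, and it contains a circuit $C\subseteq Y$ with $|C|\leq s+1$. Combined with the previous point, $|C|=s+1$, so $C=Y$. Thus every $(s+1)$-subset of $X$ is a circuit, giving $M|X\cong U_{s,|X|}$.

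For the bound $|X|<s+2t$, I argue by contradiction: assume $|X|\geq s+2t$. Pick any $t$-subset $T\subseteq X$ and use the $(s,2s,t,2t)$-property to enclose $T$ in a $2t$-cocircuit $C^*$. Since $|X\cap C^*|\leq |C^*|=2t$, we have $|X\setminus C^*|\geq s$, so we can choose an $s$-subset $B\subseteq X\setminus C^*$. By the uniform structure just established, $B$ is a basis of $M|X$ and $B\cup\{y\}$ is an $(s+1)$-element circuit of $M$ for every $y\in X\setminus B$. Taking any $y\in T$, this circuit meets $C^*$ only at $y$ because $B\cap C^*=\emptyset$, violating orthogonality. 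I do not foresee a significant obstacle—the only delicate point is ensuring the extension $C\subseteq Y$ in part (i) can be carried out, which is automatic once $|E(M)|\ge s$; this is implicit in having the $(s,2s,t,2t)$-property nonvacuously.
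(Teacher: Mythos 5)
Your proof is correct. Part (i) and the identification $M|X \cong U_{s,|X|}$ in part (ii) follow the same reasoning as the paper: any set of size at most $s$ sits properly inside a $2s$-circuit and is therefore independent, which rules out small circuits and forces every $(s+1)$-subset of $X$ to be a circuit. For the bound $|X|<s+2t$ you take a genuinely different, though equally clean, route. The paper picks a $2t$-cocircuit $C^*$ meeting $X$ in some element $x$, observes that $X-C^*$ lies in the hyperplane $E(M)-C^*$ while $x$ does not, hence $r(X-C^*)<s$ and so $|X-C^*|<s$ by part (i), which forces $|X\cap C^*|>2t$ and contradicts $|C^*|=2t$. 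You instead enclose a $t$-subset $T\subseteq X$ in a $2t$-cocircuit $C^*$, note that $|X\setminus C^*|\geq s$ lets you choose a basis $B$ of $M|X$ disjoint from $C^*$, and then the $(s+1)$-circuit $B\cup\{y\}$ for $y\in T$ meets $C^*$ only in $y$, contradicting orthogonality. Your version trades the paper's closure/rank computation for a direct orthogonality argument, which is more in keeping with the style of the surrounding lemmas; the underlying count ($|C^*|=2t$ cannot absorb enough of $X$) is the same in both.
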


\begin{proof}
  Every subset of $E(M)$ of size at most $s$ is independent since it is contained in a circuit of size $2s$. In particular, \ref{rt1} holds.
  
  Now let $r(X)=s$.  Then every $(s+1)$-element subset of $X$ is a circuit, so $M|X\cong U_{s,|X|}$.
  Suppose for a contradiction that $|X|\geq s+2t$. Let $C^*$ be a $2t$-element cocircuit such that there is some $x\in X\cap C^*$. Then $X-C^*$ is contained in the hyperplane $E(M)-C^*$. Since $x\in X\cap C^*$, we have $r(X-C^*)<r(X)=s$. Therefore, $X-C^*$ is an independent set, so $|X-C^*|<s$. Since $|X|\geq s+2t$, we have $|C^*|>2t$, a contradiction. Thus, \ref{rt2} holds.
\end{proof}

\begin{lemma}
  \label{lemmaA}
  Let $M$ be a matroid with the $(s,2s,t,2t)$-property, and let $C_1^*,C_2^*,\dotsc,C_{s-1}^*$ be a collection of pairwise disjoint cocircuits of $M$. 
  Let $Y = E(M)-\bigcup_{i \in [s-1]} C_i^*$.
  For all $y \in Y$, there is a $2s$-element circuit~$C_y$ containing $y$ such that either
  \begin{enumerate}
    \item $|C_y \cap C_i^*| = 2$ for all $i \in [s-1]$, or\label{A1}
    \item $|C_y \cap C_j^*| = 3$ for some $j \in [s-1]$, and $|C_y \cap C_i^*| = 2$ for all $i \in [s-1]-\{j\}$.\label{A2}
  \end{enumerate}
  Moreover, if $C_y$ satisfies \ref{A2}, then there are at most $s+2t-1$ elements $w \in Y$ such that $(C_y-y) \cup \{w\}$ is a circuit.
\end{lemma}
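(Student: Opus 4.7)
For the main statement, I would pick an $s$-element set $T_y := \{y\} \cup \{x_i : i \in [s-1]\}$ where $x_i \in C_i^*$ is arbitrary; by the $(s,2s,t,2t)$-property this set is contained in a $2s$-element circuit $C_y$. For each $i$, the cocircuit $C_i^*$ shares $x_i$ with $C_y$, so orthogonality forces $|C_y \cap C_i^*| \ge 2$. The $C_i^*$ are pairwise disjoint and $y \in C_y \cap Y$, so
\[
2(s-1) \;\le\; \sum_{i \in [s-1]} |C_y \cap C_i^*| \;=\; |C_y| - |C_y \cap Y| \;\le\; 2s - 1.
\]
The sum must therefore be either $2s-2$ (all intersections equal $2$, case~\ref{A1}) or $2s-1$ (exactly one equals $3$ and the rest equal $2$, case~\ref{A2}).

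For the moreover clause, assume $C_y$ satisfies \ref{A2} with $|C_y \cap C_j^*| = 3$. Set $X := C_y - y$, $X_j := X \cap C_j^*$, and $W := \{w \in Y : X \cup \{w\} \text{ is a circuit}\}$. Each $w \in W$ satisfies $w \in \cl_M(X)$, and circuit-minimality gives $w \notin \cl_M(X \setminus \{x\})$ for every $x \in X$. The plan is to prove the rank bound $r_M(W) \le s$ and then invoke \cref{lem:rank-t}: by part~\ref{rt1} every subset of $W$ of size at most $s$ is independent, so either $|W| < s$ (in which case the bound is trivial) or $r_M(W) \ge s$; combined with $r_M(W) \le s$, part~\ref{rt2} then yields $|W| < s + 2t$, i.e., $|W| \le s + 2t - 1$.

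The main obstacle is establishing $r_M(W) \le s$; this is where the case-\ref{A2} structure is essential. I would pass to the contraction $M' := M/(X \setminus X_j)$. A short rank calculation — using that $X \cup \{w\}$ is a circuit of $M$ and $X \setminus X_j$ is independent — shows that in $M'$ each set $X_j \cup \{w\}$ (with $w \in W$) is a $4$-element circuit, so $W$ sits inside the rank-$3$ flat $\cl_{M'}(X_j)$. To convert this minor-level information into the desired bound on $r_M(W)$ itself, I plan to combine it with orthogonality back in $M$: applying strong circuit elimination iteratively to pairs of fundamental circuits $X \cup \{w_1\}, X \cup \{w_2\}$ at the three elements of $X_j$ produces shorter circuits through $\{w_1,w_2\}$, and orthogonality of these against $C_j^*$ and the remaining $C_i^*$ (the oddness $|X \cap C_j^*| = 3$ being crucial) progressively forbids independent subsets of $W$ of size exceeding $s$. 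The argument parallels the $s = t$ treatment in~\cite{bccgw2019}, with $X_j$ playing the analogous role.
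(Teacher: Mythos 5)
Your treatment of the main dichotomy is correct and matches the paper: take a transversal $\{y\}\cup\{x_i : i\in[s-1]\}$, extend to a $2s$-element circuit $C_y$, and combine orthogonality (which forces $|C_y\cap C_i^*|\ge 2$) with the count $\sum_i |C_y\cap C_i^*| = |C_y| - |C_y\cap Y| \le 2s-1$ to land in one of the two cases. Your route from $r(W)\le s$ to the final bound via \cref{lem:rank-t} is also fine.

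The gap is the crucial inequality $r_M(W)\le s$ itself, and the route you sketch does not reach it. The contraction $M'=M/(X\setminus X_j)$ indeed yields $r_{M'}(W)\le r_{M'}(X_j)=3$, but contracting a $(2s-4)$-element independent set can drop ranks by up to $2s-4$, so this alone only bounds $r_M(W)\le 2s-1$. Even if you sharpen it by observing that the $s-2$ pairs of $X\setminus X_j$ lie in $s-2$ pairwise disjoint cocircuits that avoid $Y\supseteq W$, so appending them to $W$ raises the rank by at least $s-2$, you obtain $r_M(W) \le r_{M'}(W) + (2s-4)-(s-2) = s+1$, which is one too many: \cref{lem:rank-t}\ref{rt2} says nothing when $r(W)=s+1$. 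The subsequent ``iterated strong circuit elimination plus orthogonality against $C_j^*$'' plan is not carried out and I do not see how it closes this last gap. The paper instead works entirely in $M$ with a submodularity argument: put $S=C_y-\{y\}$, so $W\subseteq\cl(S)\cap Y$ with $|S|=2s-1$; since $S$ contains an element of each of the $s-1$ disjoint cocircuits $C_i^*$ and $Y$ avoids all of them, $r(\cl(S)\cup Y)\ge r(Y)+(s-1)$, and then $r(W)\le r(\cl(S))+r(Y)-r(\cl(S)\cup Y)\le (2s-1)-(s-1)=s$. Note this never uses $|C_y\cap C_j^*|=3$ --- it needs only $|S|=2s-1$ and that $S$ meets each $C_i^*$ --- so your premise that ``the case-\ref{A2} structure is essential'' and that the ``oddness $|X\cap C_j^*|=3$ is crucial'' is a misreading: case \ref{A2} appears in the statement only because that is the case needed downstream, not because the rank bound depends on it.
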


\begin{proof}
  Choose an element $c_i \in C_i^*$ for each $i \in [s-1]$.  By the $(s,2s,t,2t)$-property, there is a $2s$-element circuit~$C_y$ containing $\{c_1,c_2,\dotsc,c_{s-1},y\}$, for each $y \in Y$.
  By orthogonality, $C_y$ satisfies \ref{A1} or \ref{A2}.

  Suppose $C_y$ satisfies \ref{A2}, and let $S =C_y-Y= C_y-\{y\}$.
  Let $W = \{w \in Y : S \cup \{w\} \textrm{ is a circuit}\}$.
  It remains to prove that $|W| < s+2t$.
  Observe that $W \subseteq \cl(S) \cap Y$, and, since $S$ contains $s-1$ elements in pairwise disjoint cocircuits that avoid $Y$, we have $r(\cl(S) \cup Y) \ge r(Y) + (s-1)$.  Thus,
  \begin{align*}
    r(W) &\le r(\cl(S) \cap Y) \\
    &\le r(\cl(S)) + r(Y) - r(\cl(S) \cup Y) \\
    &\le (2s-1) + r(Y) - (r(Y)+ (s-1)) \\
    &=s,
  \end{align*}
  using submodularity of the rank function at the second line.

  Now, by \cref{lem:rank-t}\ref{rt1}, if $r(W) < s$, then $W$ is independent, so $|W| = r(W) < s < s + 2t$.
  On the other hand, by \cref{lem:rank-t}\ref{rt2}, if $r(W)=s$, then $M|W \cong U_{t,|W|}$ and $|W|<s+2t$, as required.
\end{proof}

\begin{lemma}
  \label{lem:disjoint}
  There exists a function $h$ such that if $M$ is a matroid with at least $h(k,d,t)$ $k$-element circuits, and the property that every $t$-element set is contained in a $2t$-element cocircuit for some positive integer $t$, then $M$ has a collection of $d$ pairwise disjoint $2t$-element cocircuits.
\end{lemma}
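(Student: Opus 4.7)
The plan is to apply the Erd\H{o}s--Rado sunflower lemma twice: first to the $k$-element circuits of $M$, and then to the $2t$-element cocircuits that they yield. I would choose $h(k,d,t)$ large enough that, by the sunflower lemma, the $k$-circuits of $M$ contain a sunflower of size $N_1$ (with $N_1$ to be fixed later) having core $Y$ of size at most $k-1$ and pairwise disjoint petals $P_1,\ldots,P_{N_1}$, each of size $k-|Y|\ge 1$. Since $Y$ is properly contained in each circuit, $Y$ is independent. Partitioning the petals into blocks of size $\lceil t/(k-|Y|)\rceil$ and choosing a $t$-subset from each block's union produces pairwise disjoint $t$-subsets $T_1,\ldots,T_{N_2}$ of $E(M)-Y$, with $N_2 \ge N_1/\lceil t/(k-|Y|)\rceil$. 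The cocircuit hypothesis then provides a $2t$-element cocircuit $D_j^*$ containing each $T_j$.

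I would then apply the sunflower lemma again, this time to $D_1^*,\ldots,D_{N_2}^*$, obtaining a sub-sunflower of $N_3$ cocircuits with core $Z$ of size at most $2t-1$. If $|Z|=0$, then the $N_3$ cocircuits are pairwise disjoint and taking $N_3 \ge d$ yields the desired collection.

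The hard part will be the case $|Z|\ge 1$, where every two cocircuits of the sub-sunflower meet exactly in $Z$, so only one of them can be added to a pairwise disjoint family. To handle this I would argue by induction on $d$, taking $d=1$ as immediate from the cocircuit hypothesis. For the inductive step, after selecting one cocircuit $E_1^*$ from the sub-sunflower, orthogonality forces $E_1^*$ to meet each pairwise disjoint petal $P_i$ in either $0$ or at least $2$ elements, and since $|E_1^* - T_{j_1}| \le t$, the cocircuit $E_1^*$ meets at most $\lfloor t/2\rfloor + 1$ of the petals. By choosing $h(k,d,t)$ sufficiently large relative to $h(k,d-1,t)$, one ensures that many $k$-circuits remain disjoint from $E_1^*$ after discarding the few petals $E_1^*$ meets; the inductive hypothesis, suitably adapted, then produces $d-1$ further pairwise disjoint $2t$-cocircuits that also avoid $E_1^*$. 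Making this induction rigorous requires either strengthening the lemma statement to forbid a prescribed set $B\subseteq E(M)$, or passing to the deletion $M\ba E_1^*$ and verifying that the cocircuit property continues to supply $2t$-cocircuits of $M$ disjoint from $E_1^*$ through the surviving $k$-circuits.
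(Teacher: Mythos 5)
Your proposal takes a genuinely different route from the paper, and it has a real gap. The paper's proof does not apply a sunflower argument directly to the $k$-circuits. Instead it cites a prior result (\cite[Lemma 3.2]{bccgw2019}) guaranteeing that a matroid with many $k$-element circuits contains many \emph{pairwise disjoint} circuits. With $dt$ pairwise disjoint circuits in hand, it groups them into $d$ batches of $t$, picks a transversal $\{c_1,\dots,c_t\}$ of each batch, and lets $C^*$ be the $2t$-cocircuit through that transversal. Orthogonality then forces $|C^* \cap C_j| \ge 2$ for every $j$, and since $|C^*|=2t$ this pins $C^*$ inside $\bigcup_j C_j$ exactly. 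Disjoint batches therefore yield disjoint cocircuits, with no case analysis on sunflower cores at all.

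The gap in your argument is in the orthogonality step. You assert that ``orthogonality forces $E_1^*$ to meet each pairwise disjoint petal $P_i$ in either $0$ or at least $2$ elements.'' This is false when the first sunflower has a nonempty core $Y$: orthogonality constrains the intersection of $E_1^*$ with the \emph{circuits} $Y \cup P_i$, not with the petals $P_i$ themselves. A cocircuit containing two elements of $Y$ already satisfies $|E_1^* \cap (Y \cup P_i)| \ge 2$ for every $i$, and is then free to meet each $P_i$ in exactly one element. Consequently your bound that $E_1^*$ touches at most $\lfloor t/2\rfloor + 1$ petals does not hold, and more fundamentally the cocircuits $D_j^*$ are not confined to the union of the petals when $Y \neq \emptyset$. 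This is exactly why the paper needs honest pairwise disjoint circuits rather than a sunflower: with disjoint circuits, orthogonality traps the $2t$-cocircuit entirely, whereas with a nonempty core it does not. Your fallback induction on $d$ via $M \ba E_1^*$ has the further difficulty you already flagged: the $t$-to-$2t$ cocircuit property is not inherited by deletions. To repair the argument you would essentially need to first prove the cited ``many $k$-circuits imply many pairwise disjoint circuits'' statement (handling the nonempty-core case by repeated circuit elimination across petals), after which the orthogonality argument of the paper goes through verbatim and the second sunflower application becomes unnecessary.
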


\begin{proof}
  By \cite[Lemma 3.2]{bccgw2019}, there is a function $g$ such that if $M$ has at least $g(k,d)$ $k$-element circuits, then $M$ has a collection of $d$ pairwise disjoint circuits.
  We define $h(k,d,t) = g(k,dt)$, and claim that a matroid with at least $h(k,d,t)$ $k$-element circuits, and the property that every $t$-element set is contained in a $2t$-element cocircuit, has a collection of $d$ pairwise disjoint $2t$-element cocircuits.

  Let $M$ be such a matroid.
  Then $M$ has a collection of $dt$ pairwise disjoint circuits.
  We partition these into $d$ groups of size $t$: 
  call this partition $(\mathcal{C}_1,\dotsc,\mathcal{C}_d)$.
  Since the $t$ circuits in any cell of this partition are pairwise disjoint, it now suffices to show that, for each $i \in [d]$, there is a $2t$-element cocircuit contained in the union of the members of $\mathcal{C}_i$.
  Let $\mathcal{C}_i = \{C_1,\dotsc,C_{t}\}$ for some $i \in [d]$.
  Pick some $c_j \in C_j$ for each $j \in [t]$.
  Then, since $\{c_1,c_2,\dotsc,c_{t}\}$ is a $t$-element set, it is contained in a $2t$-element cocircuit, which, by orthogonality, is contained in $\bigcup_{j \in [t]}C_j$.
\end{proof}

\begin{lemma}
  \label{setup}
  Let $M$ be a matroid with the $(s,2s,t,2t)$-property such that $r(M)\geq r^*(M)$. There exists a function $g$ such that, if $|E(M)| \ge g(s,t,q)$, then $M$ has $s-1$ pairwise disjoint $2t$-element cocircuits $C_1^*, C_2^*, \dotsc, C_{s-1}^*$, and
  there is some $Z \subseteq E(M)-\bigcup_{i \in [s-1]}C_i^*$ such that
  \begin{enumerate}
    \item $r_{M}(Z) \ge q$, and\label{ps1}
    \item for each $z \in Z$, there exists an element $z'\in Z-\{z\}$ such that $\{z,z'\}$ is contained in a $2s$-element circuit~$C$ with $|C \cap C_i^*|=2$ for each $i \in [s-1]$.\label{ps2}
  \end{enumerate}
\end{lemma}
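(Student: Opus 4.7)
The plan is to locate the cocircuits via \cref{lem:disjoint}, define $Y$ as their complement in $E(M)$, and then obtain $Z$ by discarding a small ``type~(A2)'' part using \cref{lemmaA}. First I would observe that the \ttp forces $M$ to have many $2s$-element circuits: since every $s$-subset of $E(M)$ lies in some $2s$-circuit, and each such circuit contains only $\binom{2s}{s}$ such subsets, the number of $2s$-circuits is at least $\binom{|E(M)|}{s}/\binom{2s}{s}$, which grows with $|E(M)|$. For $|E(M)|$ sufficiently large in terms of $s$ and $t$, this exceeds $h(2s,s-1,t)$ from \cref{lem:disjoint}, yielding $s-1$ pairwise disjoint $2t$-element cocircuits $C_1^*,\dotsc,C_{s-1}^*$. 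Set $Y := E(M) - \bigcup_{i \in [s-1]} C_i^*$. Each $E(M)-C_i^*$ is a hyperplane, so submodularity gives $r(Y) \ge r(M) - (s-1)$; combining this with $r(M) \ge r^*(M) = |E(M)| - r(M)$ yields $r(Y) \ge |E(M)|/2 - (s-1)$.

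Next I would apply \cref{lemmaA} and split $Y = Y_1 \cup Y_2$, where $Y_1$ is the set of $y \in Y$ contained in some $2s$-circuit $C$ with $|C \cap C_i^*| = 2$ for all $i \in [s-1]$ (a ``type~(A1)'' circuit), and $Y_2 := Y - Y_1$. For each $y \in Y_2$, the circuit $C_y$ supplied by \cref{lemmaA} must be of type~(A2), so $S_y := C_y - y$ lies entirely in $\bigcup C_i^*$ and meets some $C_j^*$ in three elements and every other $C_i^*$ in exactly two. The number of $(2s-1)$-subsets of $\bigcup C_i^*$ with this intersection pattern is at most $N(s,t) := (s-1)\binom{2t}{3}\binom{2t}{2}^{s-2}$, which depends only on $s$ and $t$. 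For each such candidate set $S$, the second clause of \cref{lemmaA} bounds by $s+2t-1$ the number of $w \in Y$ with $S \cup \{w\}$ a circuit. Hence $|Y_2| \le N(s,t)(s+2t-1)$, a bound independent of $|E(M)|$.

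Finally, subadditivity of rank gives $r(Y) \le r(Y_1) + r(Y_2) \le r(Y_1) + |Y_2|$, so choosing $|E(M)|$ large enough in terms of $s$, $t$, and $q$ (concretely, larger than $2(q + s - 1 + N(s,t)(s+2t-1))$, and also large enough for the cocircuit extraction above) forces $r(Y_1) \ge q$. Set $Z := Y_1$. Condition~\ref{ps1} holds by this rank bound. For condition~\ref{ps2}: if $z \in Z = Y_1$, then $z$ lies in some type~(A1) circuit $C$ with $|C \cap C_i^*| = 2$ for all $i \in [s-1]$; consequently $|C \cap Y| = 2$, so there is a unique second element $z' \in C \cap Y$, and this $z'$ automatically lies in $Y_1 = Z$ because the \emph{same} circuit $C$ is a type~(A1) circuit through $z'$. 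The main obstacle the proof must navigate is precisely ensuring that the pairing partner $z'$ lies back inside $Z$: this is why $Y_1$ should be defined via the \emph{existence} of any type~(A1) circuit through $y$, rather than via the specific choice of $C_y$ produced by \cref{lemmaA}.
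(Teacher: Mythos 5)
Your argument is correct and follows the same overall strategy as the paper: extract $s-1$ pairwise disjoint $2t$-element cocircuits via \cref{lem:disjoint} after a circuit count, set $Y$ to be the complement of their union, invoke \cref{lemmaA} to classify the $2s$-circuits through each $y\in Y$, discard the elements whose only guaranteed circuit is of type~(A2), bound the discarded part using the ``moreover'' clause of \cref{lemmaA} together with the count of possible intersection patterns, and then lower-bound the rank of what remains.

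One point is worth singling out, because you implicitly repair a small gap in the paper's version. The paper sets $Z=Y-W$ where $W$ is the set of $w\in Y$ lying in \emph{some} type~(A2) circuit, and then asserts that (ps2) holds. But for $z\in Y-W$, the circuit $C_z$ from \cref{lemmaA} is of type~(A1) with $C_z\cap Y=\{z,z'\}$, and nothing prevents $z'$ from lying in a type~(A2) circuit as well, i.e.\ $z'\in W$; so it is not clear from the paper's definition that the pairing partner $z'$ stays inside $Z$. Your definition $Z=Y_1:=\{y\in Y:\ y\ \text{lies in some type~(A1) circuit}\}$ closes this: the partner $z'$ lies in the \emph{same} type~(A1) circuit $C$, hence in $Y_1$ by definition. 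Since $Y-W\subseteq Y_1$, your rank estimate $r(Y_1)\ge r(Y)-|Y_2|$ is at least as strong as the paper's $r(Y)-|W|$, so the rest of the quantitative argument is unaffected. Your other deviations (using $\binom{|E(M)|}{s}/\binom{2s}{s}$ rather than $|E(M)|/(2s)$ to count $2s$-circuits, and bounding $r(Y)\ge r(M)-(s-1)$ via hyperplane intersections rather than $r(Y)\ge r(M)-|X|$) are cosmetic and both valid.
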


\begin{proof}
  By \cref{lem:disjoint}, there is a function $h$ such that if $M$ has at least $h(k,d,t)$ $k$-element circuits, then $M$ has $d$ pairwise disjoint $2t$-element cocircuits. 
  
  Suppose $|E(M)|\geq 2s\cdot h(2s,s-1,t)$. By the $(s,2s,t,2t)$-property, $M$ has at least $h(2s,s-1,t)$ distinct $2s$-element circuits. Therefore, by \cref{lem:disjoint}, $M$ has a collection of $s-1$ pairwise disjoint $2t$-element cocircuits $C_1^*,\dotsc, C_{s-1}^*$.

  Let $X = \bigcup_{i \in [s-1]}C_i^*$ and $Y=E(M)-X$.
  By \cref{lemmaA}, for each $y \in Y$ there is a $2s$-element circuit~$C_y$ containing $y$ such that $|C_y \cap C_j^*| = 3$ for at most one $j \in [s-1]$ and $|C_y \cap C_i^*| = 2$ otherwise.
  Let $W$ be the set of all $w \in Y$ such that $w$ is in a $2s$-element circuit~$C$ with $|C\cap C_j^*|=3$ for some $j \in [s-1]$, and $|C \cap C_i^*|=2$ for all $i \in [s-1]-\{j\}$.
  Now, letting $Z=Y-W$, we see that \ref{ps2} is satisfied.  It remains to show that \ref{ps1} holds.
  
  Since each $C_i^*$ has size $2t$, there are $(s-1)\binom{2t}{3}\binom{2t}{2}^{s-2}$
  sets $X'\subseteq X$ with $|X' \cap C_j^*|=3$ for some $j \in [s-1]$ and $|X' \cap C_i^*|=2$ for all $i \in [s-1]-\{j\}$.
  It follows, by \cref{lemmaA}, that $|W| \le f(s,t)$ where \[f(s,t) = (s+2t-1)\left[(s-1)\binom{2t}{3}\binom{2t}{2}^{s-2}\right].\]

  We define \[g(s,t,q) = \max\left\{2s\cdot h(2s,s-1,t), 2\big(2t(s-1)+f(s,t)+q\big)\right\}.\]
  Suppose that $|E(M)| \ge g(s,t,q)$.
  Since $r(M)\geq r^*(M)$ and $|E(M)|\geq2(2t(s-1)+f(s,t)+q)$, we have $r(M) \ge 2t(s-1)+f(s,t)+q$. Then,
  \begin{align*}
    r_{M}(Z) &\ge r_{M}(Y) - |W| \\
    &\ge \big(r(M)-2t(s-1)\big) - f(s,t) \\
    &\ge q,
  \end{align*}
  so \ref{ps1} holds as well.
\end{proof}

\sloppy
\begin{lemma}
  \label{lem:payoff}
  Let $M$ be a matroid with the $(s,2s,t,2t)$-property. Suppose $M$ has $s-1$ pairwise disjoint $2t$-element cocircuits $C_1^*, C_2^*, \dotsc, C_{s-1}^*$
  and, for some positive integer~$p$, there is a set $Z \subseteq E(M)-\bigcup_{i \in [s-1]}C_i^*$ such that
  \begin{enumerate}[label=\rm(\alph*)]
    \item $r(Z) \ge \binom{2t}{2}^{s-1}(p + 2(s-1))$, and
    \item for each $z \in Z$, there exists an element $z'\in Z-\{z\}$ such that $\{z,z'\}$ is contained in a $2s$-element circuit $C$ of $M$ with $|C\cap C_i^*|=2$ for each $i\in [s-1]$.
  \end{enumerate}
  There exists a subset $Z' \subseteq Z$ and a partition $\pi=( Z_1', \dotsc, Z_p' )$ of $Z'$ into pairs such that
  \begin{enumerate}
    \item each circuit of $M|Z'$ is a union of pairs in $\pi$, and
    \item the union of any $s$ pairs in $\pi$ contains a circuit.
  \end{enumerate}
\end{lemma}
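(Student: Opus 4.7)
The plan is: apply pigeonhole to fix a single intersection $T$ of the witnessing $2s$-circuits with $\bigcup_i C_i^*$; contract by $T$ to expose a parallel-class structure on $Z$; and extract $p$ pairs from distinct parallel classes in $M/T$. Hypothesis (b) provides, for each $z \in Z$, a $2s$-element circuit $C_z = \{z,z'\} \cup T_z$ with $T_z := C_z \cap \bigcup_{i \in [s-1]} C_i^*$ containing two elements from each $C_i^*$, so $|T_z|=2(s-1)$. Since there are at most $\binom{2t}{2}^{s-1}$ choices of $T_z$, writing $Y_T := \{z \in Z : \exists z' \in Z,\, \{z,z'\} \cup T \text{ is a circuit of } M\}$, the submodular bound $r(Z) \le \sum_T r(Y_T)$ together with hypothesis (a) yields some $T$ with $r(Y_T) \ge p + 2(s-1)$. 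Fix such $T$ and set $Y := Y_T$.

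Since $T$ is a proper subset of a $2s$-circuit, $T$ is independent with $r(T) = 2(s-1)$, and the partner relation preserves $Y$. In $M/T$, every partnered pair is parallel, and $r_{M/T}(Y) \ge r(Y) - r(T) \ge p$, so $M/T|Y$ has at least $p$ parallel classes, each meeting $Y$ in at least two elements. Pick representatives $z_1,\dotsc,z_p \in Y$ from $p$ distinct parallel classes, and for each $i$ a partner $z_i' \in Y$ in the parallel class of $z_i$; distinctness of the chosen classes makes the $2p$ elements distinct. Set $Z' := \bigcup_{i=1}^p \{z_i,z_i'\}$ and $\pi := (\{z_1,z_1'\},\dotsc,\{z_p,z_p'\})$.

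To verify (2), apply the $(s,2s)$-property to $\{z_{i_1},\dotsc,z_{i_s}\}$: it lies in a $2s$-circuit $D$, and orthogonality forces $|D \cap C_i^*| \in \{0\} \cup [2,\infty)$, while $|D|=2s$ and only $s$ slots of $D$ remain to account for intersections with $\bigcup_i C_i^*$, giving $\sum_i |D \cap C_i^*| \le s$. A preliminary Ramsey-theoretic refinement via \cref{hyperramsey} of the initial pair collection ensures that on the chosen pairs the circuit $D$ must avoid $\bigcup_i C_i^*$ entirely and hence $D \subseteq \bigcup_k\{z_{i_k},z_{i_k}'\}$. For (1), suppose a circuit $C$ of $M|Z'$ contains $z_i$ but not $z_i'$; strong circuit elimination applied to $C$ and $\{z_i,z_i'\}\cup T$ at the shared element $z_i$ produces a circuit $D' \ni z_i'$ contained in $(C-\{z_i\}) \cup \{z_i'\} \cup T$, and combining the independence of $C - \{z_i\}$ with the parallel structure of the pairs in $M/T$ leads, via a further elimination or rank calculation, to a circuit strictly smaller than $C$ inside $Z'$, contradicting minimality. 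The main obstacle is the Ramsey step for (2): forcing the $2s$-circuit to consist entirely of the chosen pairs (and not touch $\bigcup_i C_i^*$) requires carefully selecting a large monochromatic sub-collection of pairs on which the circuits through $s$ representatives behave uniformly.
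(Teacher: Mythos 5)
Your setup — pigeonhole on the $\binom{2t}{2}^{s-1}$ possible intersections $T$, contraction by $T$, extraction of $p$ pairs from distinct parallel classes of $(M/T)|Y$ — matches the paper's sublemma exactly and is correct. The problems are in your verifications of the two conclusions.

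For conclusion (2), the argument has a genuine gap that you yourself flag as ``the main obstacle.'' The $2s$-element circuit $D$ obtained by applying the $(s,2s,t,2t)$-property to $\{z_{i_1},\dotsc,z_{i_s}\}$ is \emph{not} constrained to lie in $Z' \cup \bigcup_i C_i^*$: its remaining $s$ elements may come from anywhere in $E(M)$. Your orthogonality/counting bound $\sum_i |D\cap C_i^*| \le s$ says nothing about the rest of $D$, and no version of \cref{hyperramsey} can control where an arbitrary circuit through the representatives goes (Ramsey is indeed used in the paper, but in the proof of \cref{mainthmtake2}, for a different purpose). The paper sidesteps this entirely: it never appeals to the $(s,2s)$-property again after (b). Instead, it proves the stronger claim (\cref{induct}) that every union of $s$ pairs from $\mathcal{X}\cup\pi$ (where $\mathcal{X}$ is the partition of $X$ into its $s-1$ pairs $X\cap C_i^*$) contains a circuit, by induction on the number of pairs taken from $\pi$: the base case is the controlled circuit $X\cup Z_i'$ from sublemma condition (I), and the inductive step uses circuit elimination on two such circuits over a shared pair in $\mathcal{X}$, together with the structural fact (\cref{metamatroid}) that every circuit of $M|(X\cup Z')$ is a union of pairs. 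Your proposal is missing this inductive mechanism.

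For conclusion (1), your sketch (``strong circuit elimination \dots leads, via a further elimination or rank calculation, to a circuit strictly smaller than $C$'') is not an argument; it doesn't reach a contradiction. The paper's argument is short and you should adopt it: if $C$ is a circuit of $M|Z'$ with $z\in C$ and $z'\notin C$ for a pair $\{z,z'\}\in\pi$, let $W$ be the union of the pairs in $\pi$ meeting $C-\{z\}$; then $z\in\cl(C-\{z\})\subseteq\cl(X\cup W)$, so $z\in\cl_{M/X}(W)$, contradicting sublemma condition (II), which says the representatives from distinct parallel classes are independent in $M/X$. (The paper actually proves the stronger statement for circuits of $M|(X\cup Z')$, handling elements $x_i\in X$ by orthogonality with $C_i^*$; this extended version is what feeds the induction for conclusion (2).)
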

\fussy

\begin{proof}
  We first prove the following:

  \begin{sublemma}
    \label{prelem:payoff}
    There exists a $(2s-2)$-element set $X$ such that $|X\cap C_i^*|=2$ for every $i\in[s-1]$ and a set $Z'\subseteq Z$ with a partition $\pi=\{ Z_1', \dotsc, Z_p' \}$ of $Z'$ into pairs such that \begin{enumerate}[label=\rm(\Roman*)]
      \item $X \cup Z_i'$ is a circuit, for each $i \in [p]$ and \label{ppo1}
      \item $\pi$ partitions the ground set of $(M/X)|Z'$ into parallel classes such that $r_{M/X}\big(\bigcup_{i \in [p]}Z_i'\big)=p$. \label{ppo2}
    \end{enumerate}
  \end{sublemma}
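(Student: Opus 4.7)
The plan is to apply a pigeonhole argument on the possible ``shared cores'' of the circuits guaranteed by hypothesis (b), followed by an analysis in $M/X$.

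First, for each $z \in Z$ I would use (b) to fix a specific partner $z^* \in Z - \{z\}$ and a $2s$-element circuit $C_z \supseteq \{z, z^*\}$ with $|C_z \cap C_i^*|=2$ for each $i$, and set $X_z = C_z - \{z, z^*\}$. Then $X_z$ is a $(2s-2)$-element set meeting each $C_i^*$ in exactly two elements, so $X_z$ ranges over a family of at most $\binom{2t}{2}^{s-1}$ possible sets. Partitioning $Z$ into classes $Z_X = \{z \in Z : X_z = X\}$ and using the subadditivity of rank, $r(Z) \le \sum_X r(Z_X)$, hypothesis (a) together with pigeonhole yields an $X$ with $r_M(Z_X) \ge p + 2(s-1)$. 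Because $X$ is a proper subset of the circuit $C_z$ for any $z \in Z_X$, the set $X$ is independent with $r_M(X) = 2s-2$, and hence $r_{M/X}(Z_X) = r_M(Z_X \cup X) - r_M(X) \ge p$.

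For such an $X$, each $z \in Z_X$ is parallel to its partner $z^*$ in $M/X$: both are non-loops since $X \cup \{z\}$ and $X \cup \{z^*\}$ are proper independent subsets of $C_z$, and $X \cup \{z, z^*\} = C_z$ is a circuit of $M$. Using $r_{M/X}(Z_X) \ge p$, I would choose an independent set $\{z_1, \ldots, z_p\} \subseteq Z_X$ in $M/X$ and define $Z_i' = \{z_i, z_i^*\}$. Since parallel elements are dependent in $M/X$, the $z_i$ lie in $p$ distinct parallel classes of $M/X$, and $z_i^*$ lies in the parallel class of $z_i$; if $z_i^*$ coincided with $z_j$ or $z_j^*$ for some $i \ne j$, then $z_j$ would be parallel to $z_i$, contradicting independence. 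Hence the pairs $Z_i'$ are pairwise disjoint. With $Z' = \bigcup_i Z_i'$ and $\pi = \{Z_1', \ldots, Z_p'\}$, condition (I) is immediate from $X \cup Z_i' = C_{z_i}$; for (II), the disjointness argument shows that the parallel classes of $(M/X)|Z'$ are exactly the $Z_i'$, while $r_{M/X}(Z') = p$ because the $z_i$ are independent in $M/X$ and each $z_i^*$ lies in the closure of $\{z_i\}$ there.

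I expect the main obstacle to be the rank bookkeeping at the pigeonhole step: what is ultimately needed is rank in the contraction $M/X$, so pigeonholing on cardinalities $|Z_X|$ would not suffice. One must apply pigeonhole to the ranks $r_M(Z_X)$ via subadditivity, and then leverage the independence of $X$ to transfer the bound to $r_{M/X}(Z_X)$. Once this is in hand, the parallel structure inherited from the circuits $C_z$ makes the extraction of the pairs $Z_i'$ essentially automatic.
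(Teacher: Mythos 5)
Your argument is correct and follows essentially the same route as the paper's: pigeonhole on the $\binom{2t}{2}^{s-1}$ possible ``cores'' $X_z = C_z - \{z,z^*\}$ using subadditivity of the rank function, then exploit that each $\{z,z^*\}$ with $z \in Z_X$ is a parallel pair in $M/X$ to extract $p$ pairs from distinct parallel classes. If anything you are slightly more explicit than the paper about verifying the pairs are disjoint and redefining $Z'$ as the union of the chosen pairs (the paper silently overloads the symbol $Z'$), but the underlying argument is identical.
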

    
  \begin{subproof}
    By (b), for each $z \in Z$, there exists an element $z'\in Z-\{z\}$ and a set $X'$ such that $\{z,z'\} \cup X'$ is a circuit of $M$ and $X'$ is the union of pairs $Y_i$ for $i\in[s-1]$, with $Y_i\subseteq C_i^*$.
    Since $|C_i^*|=2t$ for each $i\in[s-1]$, there are $\binom{2t}{2}^{s-1}$ choices for $(Y_1,Y_2,\ldots,Y_{s-1})$. Therefore, for some $m\leq\binom{2t}{2}^{s-1}$, there are $(2s-2)$-element sets $X_1,X_2,\ldots,X_m$, and sets $Z_1,Z_2,\ldots,Z_m$ whose union is $Z$, such that each of $X_1,X_2,\ldots,X_m$ intersects $C_i^*$ in two elements for each $i\in[s-1]$, and such that, for each $j\in[m]$ and each $z_j\in Z_j$, there is an element $z_j'$ such that $\{z_j,z_j'\}\cup X_j$ is a circuit. Since $Z=\bigcup_{i \in [m]}Z_i$, we have $\sum_{i\in[m]}r(Z_i)\geq r(Z)$. Thus, the pigeonhole principle implies that there is some $j\in[m]$ such that \[r(Z_j) \ge \frac{r(Z)}{\binom{2t}{2}^{s-1}} \ge p+2(s-1),\] by (a).
    
    We define $Z' = Z_j$ and $X = X_j$.
    Observe that $X \cup \{z,z'\}$ is a circuit, for some pair $\{z,z'\} \subseteq Z'$, if and only if $\{z,z'\}$ is a parallel pair in $M/X$.
    Therefore, there is a partition of the ground set of $(M/X)|Z'$ into parallel classes, where every parallel class has size at least two.
    Let $\{\{z_1,z_1'\}, \dotsc,\{z_n,z_n'\}\}$ be a collection of pairs from each parallel class such that $\{z_1,z_2,\dotsc,z_n\}$ is an independent set in $(M/X)|Z'$.
    Note that $n\geq r_{M/X}(Z') = r(Z' \cup X) -r(X) \ge r(Z') - 2(s-1) \ge p$. For $i\in[p]$, let $Z_i'=\{z_i,z_i'\}$. Then $\pi=\{ Z_1', \dotsc, Z_p' \}$ satisfies \ref{prelem:payoff}.
  \end{subproof}

  Let $X$, $\pi$, and $Z'$ be as described in \ref{prelem:payoff}, and let $\mathcal{X} = \{X_1,\dotsc,X_{s-1}\}$, where $X_i = \{x_i,x_i'\} = X \cap C_i^*$.
  
  \begin{sublemma}
    \label{metamatroid}
    Each circuit of $M|(X \cup Z')$ is a union of pairs in $\mathcal{X} \cup \pi$.
  \end{sublemma}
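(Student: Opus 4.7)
The plan is to prove that every circuit $C$ of $N := M|(X \cup Z')$ meets each of the pairs $X_1,\dotsc,X_{s-1},Z_1',\dotsc,Z_p'$ in either $0$ or $2$ elements, so that $C$ is the disjoint union of some of these pairs. This reduces to checking that each of these pairs is, in effect, the restriction to $X \cup Z'$ of a cocircuit, after which standard orthogonality does the rest.

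For the $X_i$'s this is immediate from orthogonality in $M$. By construction $X \cap C_i^* = X_i$, and since $Z' \subseteq Z \subseteq E(M) - \bigcup_{i \in [s-1]} C_i^*$, we have $C_i^* \cap (X \cup Z') = X_i$. Orthogonality between the circuit $C$ of $M$ and the cocircuit $C_i^*$ of $M$ therefore forces $|C \cap X_i| \in \{0,2\}$ for each $i \in [s-1]$.

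The substantive step will be to show that each $Z_j'$ is itself a cocircuit of $N$, via a direct rank calculation exhibiting $E(N) - Z_j' = X \cup (Z' - Z_j')$ as a hyperplane of $N$. Using the identity $r_M(X \cup S) = r_M(X) + r_{M/X}(S)$ for $S \subseteq Z'$, combined with the structural fact from \cref{ppo2} that $\pi$ is exactly the partition of $Z'$ into parallel classes of $(M/X)|Z'$ with $r_{M/X}(Z')=p$, I can read off that $r_{M/X}(T)$ equals the number of $Z_k'$'s that $T$ meets, for every $T \subseteq Z'$. In particular $r_{M/X}(Z' - Z_j') = p - 1$ and $r_{M/X}(Z' - \{e\}) = p$ for any single element $e \in Z_j'$. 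Hence $r_N(E(N) - Z_j') = |X| + (p - 1) = r(N) - 1$, while adjoining either element of $Z_j'$ to $E(N) - Z_j'$ raises the rank to $|X| + p = r(N)$; so $E(N) - Z_j'$ is closed, is a hyperplane of $N$, and $Z_j'$ is a cocircuit of $N$.

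Orthogonality in $N$ between $C$ and the cocircuit $Z_j'$ then yields $|C \cap Z_j'| \in \{0,2\}$ for every $j$, and combined with the analogous conclusion for the $X_i$'s this shows $C$ is a union of pairs in $\mathcal{X} \cup \pi$. I expect the only real obstacle to be recognizing that the $Z_j'$'s should be treated not merely as orthogonality witnesses at the level of $M$ but as cocircuits of the restriction $N$; once the rank computation above is in hand, the conclusion is an immediate application of orthogonality.
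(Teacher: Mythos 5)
Your proof is correct, and it takes a genuinely different (though closely related) route to the paper's for the substantive half concerning the pairs $Z_j' \in \pi$. The treatment of the $X_i$'s via orthogonality with the $C_i^*$'s is identical to the paper. For the $Z_j'$'s, the paper argues by contradiction: if $C \cap Z_j' = \{z\}$, then since $C$ is a circuit, $z \in \cl(C - \{z\}) \subseteq \cl(X \cup W)$ where $W$ is a union of pairs of $\pi$ not including $Z_j'$, whence $z \in \cl_{M/X}(W)$, contradicting \cref{prelem:payoff}(II) (because $z$'s parallel class in $(M/X)|Z'$ is disjoint from $W$ and the classes have a transversal of rank $p$). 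You instead promote $Z_j'$ to a cocircuit of $N = M|(X \cup Z')$ by exhibiting $E(N) - Z_j'$ as a hyperplane, then invoke orthogonality in $N$. Both arguments rest on exactly the same input — $\pi$ partitions $(M/X)|Z'$ into $p$ parallel classes of total rank $p$ — and yours uses the additional fact (which you correctly justify) that $X$ is independent in $M$, giving $r(N) = |X| + p$. The paper's closure argument is a little shorter; yours is more structured and makes explicit the pleasing symmetry that every pair in $\mathcal{X} \cup \pi$ is the trace of a cocircuit of $N$, which is the cleanest way to see why the conclusion must hold. Both are valid.
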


  \begin{subproof}
    Let $C$ be a circuit of $M|(X \cup Z')$.
    If $x_i \in C$, for some $\{x_i,x_i'\} \in \mathcal{X}$, then orthogonality with $C_i^*$ implies that $x_i' \in C$.
    Assume for a contradiction that $\{z,z'\} \in \pi$ and $C \cap \{z,z'\} = \{z\}$.
    Let $W$ be the union of the pairs in $\pi$ containing elements of $(C-\{z\}) \cap Z'$.  Then $z \in \cl(X \cup W)$.  Hence $z \in \cl_{M/X}(W)$, contradicting \cref{prelem:payoff}\ref{ppo2}.
  \end{subproof}

  \begin{sublemma}
    \label{induct}
    Every union of $s$ pairs in $\mathcal{X} \cup \pi$ contains a circuit.
  \end{sublemma}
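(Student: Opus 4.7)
The plan is to show directly that any union $U$ of $s$ pairs from $\mathcal{X}\cup\pi$ is dependent in $M$, by enlarging $U$ with one element from each missing $\mathcal{X}$-pair and then applying a rank bound. Write $U=\bigcup A\cup\bigcup B$, where $A\subseteq\mathcal{X}$ has size $s-k$ and $B\subseteq\pi$ has size $k$; note $k\ge 1$ because $|\mathcal{X}|=s-1$. Enumerate $\mathcal{X}\setminus A=\{X_{b_1},\dots,X_{b_{k-1}}\}$, choose an element $e_j\in X_{b_j}$ for each $j\in[k-1]$, and set $U'=U\cup\{e_1,\dots,e_{k-1}\}$, so $|U'|=2s+k-1$ and $U'\subseteq X\cup\bigcup B$.

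The central computation is $r(X\cup\bigcup B)=r(X)+r_{M/X}(\bigcup B)=(2s-2)+k$. The first summand uses that $X$ is independent, since it is a proper subset of the $2s$-element circuit $X\cup Z_i'$ guaranteed by \ref{prelem:payoff}\ref{ppo1}. The second uses \ref{prelem:payoff}\ref{ppo2}: the pairs of $\pi$ are distinct parallel classes of $(M/X)|Z'$ with independent representatives, so any $k$ of them contribute rank exactly $k$ in $M/X$. Consequently $r(U')\le 2s-2+k<2s+k-1=|U'|$, and $U'$ contains a circuit $C$ of $M$.

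Because $C\subseteq U'\subseteq X\cup Z'$, \ref{metamatroid} tells us that $C$ is a union of pairs from $\mathcal{X}\cup\pi$. If some $e_j$ were in $C$, then the whole pair $X_{b_j}$ would have to be contained in $C$, but the pair-mate of $e_j$ is not in $U'$ by construction---a contradiction. Hence $C$ contains none of the $e_j$, so $C\subseteq U'\setminus\{e_1,\dots,e_{k-1}\}=U$, producing the desired circuit in $U$.

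The delicate point is the rank bound: it genuinely requires the ``distinct parallel classes'' structure encoded in \ref{prelem:payoff}\ref{ppo2}, since otherwise $r_{M/X}(\bigcup B)$ could exceed $k$ and the corank argument would fail. Once that bound is in place, the rest is a short counting step combined with \ref{metamatroid}, and no iterated circuit elimination or induction on the pair structure is needed.
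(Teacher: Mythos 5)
Your proof is correct, and it takes a genuinely different route from the paper. The paper proves \ref{induct} by induction on $|\mathcal{W}\cap\pi|$: in the inductive step it brings in an unused pair $\{x,x'\}\in\mathcal{X}$, produces two circuits through $\{x,x'\}$ via the induction hypothesis, and applies circuit elimination together with \ref{metamatroid} to eliminate $\{x,x'\}$ and land a circuit inside $W$. You instead give a single, non-inductive corank count: pad $U$ with one element from each missing $\mathcal{X}$-pair to get $U'$ with $|U'|=2s+k-1$, and bound $r(U')\le r(X)+r_{M/X}(\bigcup B)=(2s-2)+k<|U'|$, where the independence of $X$ comes from \ref{prelem:payoff}\ref{ppo1} and the exact value $r_{M/X}(\bigcup B)=k$ comes from \ref{prelem:payoff}\ref{ppo2}. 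This forces a circuit $C\subseteq U'$, and \ref{metamatroid} then shows $C$ avoids the padding singletons because their pair-mates are absent from $U'$, so $C\subseteq U$. Both arguments use \ref{prelem:payoff} and \ref{metamatroid} as the essential inputs, but your version replaces the iterated circuit elimination with one rank computation, which is shorter and avoids the mild bookkeeping subtlety in the paper's step where it applies the induction hypothesis to the union of $s+1$ pairs.
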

  
  \begin{subproof}
    Let $\mathcal{W}$ be a subset of $\mathcal{X} \cup \pi$ of size $s$.
    We proceed by induction on the number of pairs in $\mathcal{W} \cap \pi$.
    If there is only one pair in $\mathcal{W} \cap \pi$, then the union of the pairs in $\mathcal{W}$ contains a circuit (indeed, is a circuit) by \cref{prelem:payoff}\ref{ppo1}.
    Suppose the result holds for any subset containing $k$ pairs in $\pi$, and let $\mathcal{W}$ be a subset containing $k+1$ pairs in $\pi$.
    Let $\{x,x'\}$ be a pair in $\mathcal{X}-\mathcal{W}$,
    and let $W = \bigcup_{W' \in \mathcal{W}}W'$.
    Then $W \cup \{x,x'\}$ is the union of $s+1$ pairs of $\mathcal{X} \cup \pi$, of which $k+1$ are in $\pi$, so, by the induction hypothesis, $W \cup \{x,x'\}$ properly contains a circuit~$C_1$.
    If $\{x,x'\} \subseteq E(M)-C_1$, then $C_1 \subseteq W$, in which case the union of the pairs in $\mathcal{W}$ contains a circuit, as desired.
    Therefore, we may assume, by \cref{metamatroid}, that $\{x,x'\} \subseteq C_1$.
    Since $X$ is independent, there is a pair $\{z,z'\} \subseteq Z' \cap C_1$.
    By the induction hypothesis, there is a circuit~$C_2$ contained in $(W-\{z,z'\}) \cup \{x,x'\}$.
    Observe that $C_1$ and $C_2$ are distinct, and $\{x,x'\} \subseteq C_1 \cap C_2$.
    Circuit elimination on $C_1$ and $C_2$, and \cref{metamatroid}, imply that there is a circuit $C_3 \subseteq (C_1 \cup C_2) - \{x,x'\} \subseteq W$, as desired.  The claim now follows by induction.
  \end{subproof}

  Now, \cref{induct} implies that the union of any $s$ pairs in $\pi$ contains a circuit, and the result follows.
\end{proof}

\begin{lemma}
  \label{lem:tis1}
  If $M$ is a matroid with the $(1,2,t,2t)$-property and at least $t$ elements, then $M$ is a $(1,t)$-spike.
  Dually, if $M$ is a matroid with the $(s,2s,1,2)$-property and at least $s$ elements, then $M$ is an $(s,1)$-spike.
\end{lemma}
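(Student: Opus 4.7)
The plan is to analyze the parallel class structure of $M$ forced by the $(1,2)$-part of the hypothesis. First I would observe that every element of $M$ lies in a $2$-element circuit, so $E(M)$ partitions into parallel classes $P_1,\dotsc,P_\ell$, each of size at least~$2$. Orthogonality between any $2$-element circuit and any $2t$-cocircuit then implies that every $2t$-cocircuit is a union of entire parallel classes. Since $|E(M)|\ge t$ and every $t$-element subset must be contained in a $2t$-cocircuit, we also have $|E(M)|\ge 2t$.

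Next I would split into two cases according to $\ell$. If $\ell=1$, then every element is in a single parallel class and there are no loops, so $M\cong U_{1,n}$ with $n=|E(M)|$; the only cocircuit of $U_{1,n}$ is $E(M)$, so $n=2t$, and any partition of these $2t$ elements into $t$ pairs witnesses that $M$ is a $(1,t)$-spike. Suppose instead $\ell\ge 2$. I would first show $\ell\ge t$: otherwise $2\le\ell<t$, and choosing a $t$-subset $T$ of $E(M)$ that meets every class (possible since $|E(M)|\ge t>\ell$) forces the $2t$-cocircuit containing $T$ to be a union of classes containing every $P_i$, hence equal to $E(M)$. This makes $\emptyset$ a hyperplane, so $r(M)=1$ and therefore $\ell=1$, a contradiction.

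With $\ell\ge t$ in hand, I would then show that every $|P_i|=2$. If instead $|P_1|\ge 3$, form a $t$-subset consisting of one element of $P_1$ together with one element from each of $t-1$ distinct classes different from $P_1$ (possible since $\ell\ge t$). The $2t$-cocircuit containing this $t$-set must contain $P_1$ entirely and each of the $t-1$ other chosen classes entirely, giving size at least $|P_1|+2(t-1)\ge 2t+1$, a contradiction. Finally, the partition $(P_1,\dotsc,P_\ell)$ exhibits $M$ as a $(1,t)$-spike: each $P_i$ is a $2$-circuit, and for any $t$ of these pairs, the $2t$-cocircuit containing a transversal $t$-set (which exists by the $(t,2t)$-property) must, by the orthogonality observation together with a size count, be exactly the union of those $t$ pairs. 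The dual statement follows by duality, since $M$ has the $(s,2s,1,2)$-property iff $M^*$ has the $(1,2,s,2s)$-property, and $M^*$ is a $(1,s)$-spike iff $M$ is an $(s,1)$-spike. The only mildly delicate step is ruling out the branch $\ell<t$; otherwise the argument is a clean case analysis on parallel-class sizes.
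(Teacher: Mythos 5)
Your proof is correct and follows essentially the same approach as the paper: partition $E(M)$ into parallel classes using the $(1,2)$-part of the property, then use orthogonality between $2$-circuits and $2t$-cocircuits together with a size count to pin down the structure. The only cosmetic difference is your case split (you separate $\ell=1$, rule out $2\le\ell<t$ by deriving a contradiction, then treat $\ell\ge t$), whereas the paper splits directly into $n\ge t$ and $n<t$, with the latter case collapsing to $M\cong U_{1,2t}$.
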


\begin{proof}
By duality, it suffices to consider the case where $M$ has the $(1,2,t,2t)$-property and at least $t$ elements. Since every element of $M$ is contained in a $2$-element circuit, there is a partition of $E(M)$ into parallel classes $P_1,P_2,\ldots,P_n$, where $|P_i|\geq2$ for each $i$. For each $P_i$, let $x_i\in P_i$.

First, we consider the case where $n\geq t$. Let $X$ be a $t$-element subset of $\{x_1,\ldots,x_{n}\}$; for ease of notation, we assume $X=\{x_1,\ldots,x_{t}\}$. By the $(1,2,t,2t)$-property, $X\subseteq C^*$ for some $2t$-element cocircuit $C^*$. Since $P_i$ is a parallel class, $\{x_i,y_i\}$ is a circuit for each $y_i\in P_i-\{x_i\}$. By orthogonality, $y_i\in C^*$ for each such $y_i$, so $P_i\subseteq C^*$. Since $|C^*|=2t$, and $X$ is an arbitrary $t$-element subset of $\{x_1,\ldots,x_{n}\}$, it follows that $|P_i|=2$ for each $i\in[n]$, and that the union of any $t$ of the $P_i$'s is a cocircuit. Thus $M$ is a $(1,t)$-spike.

It remains to consider the case where $n<t$. Since $M$ has at least $t$ elements, let $X$ be any $t$-element set containing $\{x_1,\ldots,x_n\}$. By the $(1,2,t,2t)$-property, there is a $2t$-element cocircuit $C^*$ containing $X$. For $i\in[n]$ and each $y_i\in P_i-\{x_i\}$, orthogonality implies $y_i\in C^*$. Thus, $E(M)=C^*$. It follows that $M\cong U_{1,2t}$, which is a $(1,t)$-spike.
\end{proof}

We now prove \cref{mainthm}, restated below.
  
\begin{theorem}
   \label{mainthmtake2}
   There exists a function $f : \mathbb{N}^2 \rightarrow \mathbb{N}$ such that, if $M$ is a matroid with the \ttp and $|E(M)| \ge f(s,t)$, then $M$ is an $(s,t)$-spike.
 \end{theorem}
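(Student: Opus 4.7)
The plan is as follows. By duality --- $M$ has the $(s,2s,t,2t)$-property if and only if $M^*$ has the $(t,2t,s,2s)$-property, and the dual of a $(t,s)$-spike is an $(s,t)$-spike --- we may assume $r(M) \ge r^*(M)$. The base cases $s = 1$ and $t = 1$ are handled directly by \cref{lem:tis1}, so we assume henceforth that $s, t \ge 2$.

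Choose an integer $q$ sufficiently large (to be specified in terms of a Ramsey number below) and apply \cref{setup} to produce $s-1$ pairwise disjoint $2t$-element cocircuits $C_1^*, \ldots, C_{s-1}^*$ together with a set $Z \subseteq E(M) - \bigcup_i C_i^*$ of rank at least $q$ satisfying hypothesis (b) of \cref{lem:payoff}. Invoke \cref{lem:payoff} with $p$ large to obtain a partition $\pi = (Z_1', \ldots, Z_p')$ of a subset $Z' \subseteq Z$ into pairs such that every circuit of $M|Z'$ is a union of pairs of $\pi$, and every union of $s$ pairs in $\pi$ contains a circuit. This conclusion is weaker than an $s$-echidna, because the circuit contained in $\bigcup_j Z_{i_j}'$ could be a proper sub-union.

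To close this gap, apply \cref{hyperramsey}: color each $s$-subset of $\pi$ by the size $2k$ of a smallest circuit inside its union (an even integer in $[s+1, 2s]$ by \cref{lem:rank-t}\ref{rt1} and property~(1) of \cref{lem:payoff}). Ramsey yields a large monochromatic sub-family of pairs. The main obstacle is proving that the monochromatic color satisfies $k = s$, so that the sub-family is a bona-fide $s$-echidna. Towards this, pick any $s$ pairs $Z_{i_1}', \ldots, Z_{i_s}'$ in the sub-family and any choice $x_j \in Z_{i_j}'$; the $(s,2s,t,2t)$-property yields a $2s$-element circuit $C$ containing $\{x_1, \ldots, x_s\}$. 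If $C \subseteq \bigcup_j Z_{i_j}'$ then $C$ fills the $2s$-element union, and the $2k$-circuit guaranteed by \cref{lem:payoff} would sit strictly inside the circuit $C$, a contradiction whenever $k < s$. The hard part is ruling out the alternative, in which $C$ escapes $\bigcup_j Z_{i_j}'$; this requires a delicate orthogonality argument combining the constraints $|C| = 2s$, the fact that $|C \cap C_i^*| = 0$ or $|C \cap C_i^*| \ge 2$ for each $i \in [s-1]$, and the ``nice'' circuit structure of $Z$ guaranteed by \cref{setup}\ref{ps2} and \cref{lem:payoff}\ref{ppo1}.

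Once $k = s$ is established, the monochromatic sub-family is an $s$-echidna of order at least $\max\{s+2t-1,\, 2s+t-1,\, 3s+t-3\}$. Applying \cref{lem:swamping} then extends it to a partition of $E(M)$ that is simultaneously an $s$-echidna and a $t$-coechidna, so $M$ is an $(s,t)$-spike by definition. The function $f(s,t)$ is obtained by composing the Ramsey number $r_s(\cdot)$ of \cref{hyperramsey} with the quantitative bounds in \cref{setup} and \cref{lem:payoff}.
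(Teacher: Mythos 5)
Your outline matches the paper's scaffolding (duality reduction, base case via \cref{lem:tis1}, \cref{setup} and \cref{lem:payoff}, Ramsey, then \cref{lem:swamping}), but the Ramsey step is where you diverge and where there is a genuine gap. You colour $s$-subsets of $\pi$ by the size $2k$ of a smallest circuit in their union and aim to show that the monochromatic colour must be $k=s$, deriving a contradiction from $k<s$. You acknowledge that your argument only works when the $2s$-element circuit $C$ through the transversal $\{x_1,\dots,x_s\}$ happens to lie inside $\bigcup_j Z'_{i_j}$, and you defer the complementary case to an unspecified ``delicate orthogonality argument.'' That case cannot simply be ruled out: the $(s,2s,t,2t)$-property gives you no control over where $C$ lives, and there is no contradiction to be had from $k<s$ at this stage of the proof. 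The paper handles $k<s$ quite differently: it does not try to eliminate this possibility, but instead shows that a clique of $j$-subsets-that-are-circuits (for $j<s$) forces, via orthogonality with the $2t$-cocircuits from the $(s,2s,t,2t)$-property, a large \emph{$t$-coechidna}, after which the \emph{dual} of \cref{lem:swamping} finishes the job. Your plan never mentions the $t$-coechidna branch or the dual of \cref{lem:swamping}, which is precisely the escape route you are missing.

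There are two further, smaller mismatches. First, the paper's \cref{hyperramsey} is a two-colour (clique/stable set) statement on $j$-uniform hypergraphs and is applied iteratively, once for each $j$ from the minimum circuit size up to $s$; your single-shot colouring of $s$-subsets by the value of $k$ is a multicolour Ramsey application, which needs at least a remark, and it does not directly yield the structural conclusion ``every $k$-subset of the monochromatic family is a circuit'' that the paper's clique case delivers. Second, even if $k=s$ were established, you would get only that the union of any $s$ pairs \emph{contains} a $2s$-element circuit, hence \emph{is} a circuit since the union has exactly $2s$ elements; that part of your argument is fine, but it is the $k<s$ branch that you have not closed and, as written, cannot close by contradiction alone.
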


\begin{proof}
  If $s=1$ or $t=1$, then, by \cref{lem:tis1}, the theorem holds with $f(s,t) = \max\{s,t\}$.
  So we may assume that $\min\{s,t\} \ge 2$.
  A matroid is an $(s,t)$-spike if and only if its dual is a $(t,s)$-spike; moreover, a matroid has the \ttp if and only if its dual has the $(t,2t,s,2s)$-property. Therefore, by duality, we may also assume that $r(M)\geq r^*(M)$.
  
  Let $r_k(n)$ be the Ramsey number described in \cref{hyperramsey}.
  For $k \in [s]$, we define the function $h_k : \mathbb{N}^2 \rightarrow \mathbb{N}$ such that \[h_{s}(s,t)=\max\{s+2t-1,2s+t-1,3s+t-3,s+3t-3\}\]
  and such that $h_k(s,t)=r_k(h_{k+1}(s,t))$ for $k\in[s-1]$.
  Note that $h_{k}(s,t) \ge h_{k+1}(s,t) \ge h_{s}(s,t)$, for each $k \in [s-1]$.
    
    Let $p = h_1(s,t)$ and let $q(s,t)=\binom{2t}{2}^{s-1}(p + 2(s-1))$.
    By \cref{setup}, there exists a function $g$ such that if $|E(M)| \ge g(s,t,q(s,t))$, then $M$ has $s-1$ pairwise disjoint $2t$-element cocircuits $C_1^*, C_2^*, \dotsc, C_{s-1}^*$, and there is some $Z \subseteq E(M)-\bigcup_{i \in [s-1]}C_i^*$ such that
      $r_M(Z) \ge q(s,t)$, and,
      for each $z \in Z$, there exists an element $z'\in Z'-\{z\}$ such that $\{z,z'\}$ is contained in a $2s$-element circuit~$C$ with $|C \cap C_i^*|=2$ for each $i \in [s-1]$.
    
    Let $f(s,t) = g(s,t,q(s,t))$, and suppose that $|E(M)| \ge f(s,t)$.
    Then, by \cref{lem:payoff}, there exists a subset $Z \subseteq Z'$ such that $Z$ has a partition into pairs $\pi = ( Z_1, \dotsc, Z_{p})$ such that
    
  \begin{enumerate}[label=\rm(\Roman*)]
    \item each circuit of $M|Z$ is a union of pairs in $\pi$, and
    \item the union of any $s$ pairs in $\pi$ contains a circuit.\label{rc2}
  \end{enumerate}
  
  Let $m=h_{s}(s,t)$. By \cref{lem:swamping} and its dual,
  it suffices to show that $M$ has either an $s$-echidna or a $t$-coechidna of order $m$. 
  If the smallest circuit in $M|Z$ has size $2s$, then, by \ref{rc2}, $\pi$ is an $s$-echidna of order $p \ge m$.
  So we may assume that the smallest circuit in $M|Z$ has size $2j$ for some $j \in [s-1]$.
  
  \begin{sublemma}
    \label{iterramsey}
    If the smallest circuit in $M|Z$ has size $2j$, for $j \in [s-1]$, and $|\pi| \ge h_j(s,t)$, then either
    \begin{enumerate}
      \item $M$ has a $t$-coechidna of order $m$, 
      or\label{ir1}
      \item there exists some $Z' \subseteq Z$ that is the union of $h_{j+1}(s,t)$ pairs in $\pi$ for which the smallest circuit in $M|Z'$ has size at least $2(j+1)$.\label{ir2}
    \end{enumerate}
  \end{sublemma}
  
  \begin{subproof}
    We define $H$ to be the $j$-uniform hypergraph with vertex set $\pi$ whose hyperedges are the $j$-subsets of $\pi$ that are partitions of circuits in $M|Z$.
    By \cref{hyperramsey}, and the definition of $h_k$, as $H$ has at least $h_j(s,t)$ vertices, it has either a clique or a stable set, on $h_{j+1}(s,t)$ vertices.
    If $H$ has a stable set~$\pi'$ on $h_{j+1}(s,t)$ vertices, then clearly \ref{ir2} holds, with $Z' = \bigcup_{P \in \pi'} P$.

    Therefore, we may assume that there are $h_{j+1}(s,t)$ pairs in $\pi$ such that the union of any $j$ of these pairs is a circuit.
    Let $Z''$ be the union of these $h_{j+1}(s,t)$ pairs.
    We claim that the union of any set of $t$ pairs contained in $Z''$ is a cocircuit.
    Let $T$ be a transversal of $t$ pairs in $\pi$ contained in $Z''$, and let $C^*$ be the $2t$-element cocircuit containing $T$.
    Suppose, for a contradiction, that there exists some pair $P \in \pi$ with $P \subseteq Z''$ such that $|C^* \cap P| = 1$.
    Select $j-1$ pairs $Z_1'',\dotsc,Z_{j-1}''$ in $\pi$ that are each contained in $Z''-C^*$ (these exist since $h_{j+1}(s,t) \ge s+2t-1 \ge 2t + j - 1$).
    Then $P \cup (\bigcup_{i \in [j-1]}Z_i'')$ is a circuit intersecting $C^*$ in a single element, contradicting orthogonality.
    We deduce that the union of any $t$ pairs in $\pi$ that are contained in $Z''$ is a cocircuit.
    Thus, $M$ has a $t$-coechidna of order $h_{j+1}(t) \ge m$,
    satisfying \ref{ir1}.
  \end{subproof}

  We now apply \cref{iterramsey} iteratively, for a maximum of $s-j$ iterations.
  If \ref{ir1} holds, at any iteration, then $M$ has a $t$-coechidna of order $m$,
  as required.
  Otherwise, we let $\pi'$ be the partition of $Z'$ induced by $\pi$; then, at the next iteration, we relabel $Z=Z'$ and $\pi=\pi'$. 
  If \ref{ir2} holds for each of $s-j$ iterations, then we obtain a subset $Z'$ of $Z$ such that the smallest circuit in $M|Z'$ has size $2s$.
  Then, by \ref{rc2}, $M$ has an $s$-echidna of order $h_{s}(s,t)=m$,
  completing the proof.
\end{proof}

\section{Properties of \texorpdfstring{$(s,t)$}{(s,t)}-spikes}
\label{sec:tspikeprops}

In this section, we prove some properties of $(s,t)$-spikes.
In particular, we show that an $(s,t)$-spike has order at least $s+t-1$; an $(s,t)$-spike of order~$m$ has $2m$ elements and rank~$m+s-t$; and the circuits of an $(s,t)$-spike that are not a union of $s$ arms meet all but at most $t-2$ of the arms. We also give some results about the connectivity of $(s,t)$-spikes of sufficiently large order.

We also show that an appropriate concatenation of the associated partition of a $t$-spike is a $(2t-1)$-anemone, following the terminology of~\cite{ao2008}. Finally, we describe a construction that can be used to obtain an $(s,t+1)$-spike 
from an $(s,t)$-spike of sufficiently large order, and we show that every $(s,t+1)$-spike can be constructed from some $(s,t)$-spike in this way.

We again assume that $s$ and $t$ are positive integers.

\subsection*{Basic properties}

\begin{lemma}
  \label{tspikeorder}
  Let $M$ be an $(s,t)$-spike with associated partition $(A_1,\ldots,A_m)$.  Then $m \ge s+t-1$.
\end{lemma}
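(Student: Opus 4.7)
The plan is to derive the bound by comparing two incompatible constraints on the rank of a hyperplane. Because the definition of an $(s,t)$-spike already requires $m \geq \max\{s,t\}$, the inequality $m \geq s+t-1$ is immediate when $s=1$ or $t=1$, since then $\max\{s,t\} \geq s+t-1$. So I may assume $s,t \geq 2$, and suppose for a contradiction that $m \leq s+t-2$.

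Pick any $t$ arms from the associated partition and let $C^*$ denote their union; by hypothesis $C^*$ is a cocircuit, so $H = E(M) - C^*$ is a hyperplane of $M$, and $r(H) = r(M) - 1$. On the one hand, $H$ is the union of the remaining $m-t \leq s-2$ arms, so $|H| \leq 2(s-2) = 2s-4$, yielding $r(H) \leq 2s-4$. On the other hand, the union of any $s$ arms is a circuit of size $2s$, hence $r(M) \geq 2s-1$ and so $r(H) \geq 2s-2$. These two bounds contradict each other.

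There is no serious obstacle here: the argument is a one-shot rank-versus-size comparison exploiting the fact that the complement of a union of $t$ arms (the hyperplane $H$) is forced to contain fewer than $s-1$ arms, which is too few pairs to support the rank demanded by the presence of $2s$-element circuits. The only minor point to check is that $C^*$ is a well-defined nonempty cocircuit so that $H$ really is a hyperplane of corank $1$, which is guaranteed by $m \geq t$.
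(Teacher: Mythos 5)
Your proof is correct, and it takes a genuinely different route from the paper's. The paper bounds $r(M) \le 2m - 2t + 1$ by noting that removing a $2t$-cocircuit and adding back one element yields a spanning set, then bounds $r^*(M) \le 2m - 2s + 1$ by duality, and finally combines the two via the identity $|E(M)| = r(M) + r^*(M)$. You instead avoid duality entirely: you compare the upper bound $r(H) \le |H| \le 2s-4$ on the rank of the hyperplane $H$ (the complement of a union-of-$t$-arms cocircuit) with the lower bound $r(M) \ge 2s-1$ coming from a $2s$-element circuit, which forces $r(H) \ge 2s-2$. Both approaches ultimately rest on the same two facts (the presence of $2s$-circuits and $2t$-cocircuits as unions of arms), but yours is self-contained in the primal matroid and works by contradiction; the trade-off is that you need a short separate dispatch of the cases $\min\{s,t\}=1$ (where the definitional bound $m \ge \max\{s,t\}$ already suffices), whereas the paper's argument handles all cases uniformly. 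Your handling of those edge cases is correct, and the contradiction $2s-2 \le r(H) \le 2s-4$ is sound.
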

\begin{proof}
  By the definition of an $(s,t)$-spike, we have $m\geq\max\{s,t\}$. Let $Y = \bigcup_{j \in [t]}A_j$, and let $y\in Y$. Since $Y$ is a cocircuit, $Z=(E(M)-Y) \cup \{y\}$ spans $M$. Therefore, $r(M)\leq|Z|=2m-2t+1$. Similarly, by duality, $r^*(M)\leq2m-2s+1$. Therefore, \[2m = |E(M)| = r(M) + r^*(M) \le (2m-2t+1)+(2m-2s+1).\] The result follows.
\end{proof}
\begin{lemma}
    \label{lem:rank-matroid}
  Let $M$ be an $(s,t)$-spike of order~$m$.
  Then $r(M)=m+s-t$ and $r^*(M)=m-s+t$.
\end{lemma}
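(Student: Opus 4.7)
The plan is to bound $r(M)$ and $r^*(M)$ from above by $m+s-t$ and $m-s+t$ respectively; since $r(M)+r^*(M) = |E(M)| = 2m$ equals the sum of these two bounds, both inequalities must be tight.

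The main technical ingredient is the following uniform bound for $Y_k := A_1 \cup \cdots \cup A_k$: for every $0 \le k \le m$, we have $r(Y_k) \le k + s - 1$. I would prove this by induction on $k$. For $k \le s-1$, the set $Y_k$ is a proper subset of the circuit $A_1 \cup \cdots \cup A_s$ (which exists since $m \ge s$ by \cref{tspikeorder}), hence independent with rank $2k \le k + s - 1$. For $k \ge s$, set $C_k := A_{k-s+1} \cup \cdots \cup A_k$, which is a circuit of rank $2s-1$ being a union of $s$ arms. Its intersection with $Y_{k-1}$ is $A_{k-s+1} \cup \cdots \cup A_{k-1}$, a union of $s-1$ arms and therefore independent of rank $2s-2$; moreover, $C_k \cup Y_{k-1} = Y_k$. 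Submodularity of the rank function then yields $r(Y_k) \le r(Y_{k-1}) + r(C_k) - r(C_k \cap Y_{k-1}) = r(Y_{k-1}) + 1$, and the inductive hypothesis completes the step.

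To deduce the desired bound $r(M) \le m+s-t$, I would invoke the cocircuit $A_{m-t+1} \cup \cdots \cup A_m$ (a union of $t$ arms, which exists since $m \ge t$): its complement $Y_{m-t}$ is a hyperplane of $M$, so $r(Y_{m-t}) = r(M) - 1$. Combining with the bound above gives $r(M) - 1 \le (m-t) + s - 1$, i.e., $r(M) \le m+s-t$. Since $M^*$ is a $(t,s)$-spike of order $m$ with the same associated partition, applying the same argument to $M^*$ yields the dual bound $r^*(M) \le m+t-s$. Adding the two bounds and using $r(M) + r^*(M) = 2m$ forces both to be equalities, giving $r(M) = m+s-t$ and $r^*(M) = m-s+t$.

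The only mildly nontrivial step is the submodularity induction; the remainder is essentially bookkeeping with hyperplanes and duality, so I don't anticipate any real obstacles.
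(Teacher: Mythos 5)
Your proof is correct, and its high-level strategy matches the paper's: bound $r(M)$ and $r^*(M)$ from above, then exploit $r(M)+r^*(M)=|E(M)|=2m$ to force both inequalities to be equalities. The difference lies in how you obtain the upper bound $r(M)\le m+s-t$. The paper exhibits a small explicit spanning set of a hyperplane: picking $I\subseteq J\subseteq[m]$ with $|I|=s-1$ and $|J|=m-t$ (possible because $m\ge s+t-1$ by \cref{tspikeorder}), the set $\{y_i : i\in I\}\cup\{x_j : j\in J\}$ spans the hyperplane $\bigcup_{j\in J}A_j$, giving $r(M)-1\le m+s-t-1$ directly. You instead prove the uniform bound $r(A_1\cup\cdots\cup A_k)\le k+s-1$ by a submodularity induction and then specialize to $k=m-t$. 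Your route is slightly longer but self-contained: it does not actually need \cref{tspikeorder}, only $m\ge\max\{s,t\}$ from the definition of an $(s,t)$-spike, and it effectively re-derives part of \cref{pro:rank-func} as a byproduct. Both arguments are sound; the paper's is marginally more economical, yours is marginally more elementary.
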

\begin{proof}
  Let $(A_1,\ldots,A_m)$ be the associated partition of $M$, and let $A_i = \{x_i,y_i\}$ for each $i \in [m]$. Choose $I \subseteq J \subseteq [m]$ such that $|I|=s-1$ and $|J| = m-t$. (This is possible by \cref{tspikeorder}.)
  Let $X = \{y_j : i \in I\} \cup \{x_j : j \in J\}$.
  Note that $\bigcup_{i \in I\cup J}A_i\subseteq\cl(X)$.
  Since $E(M)-\bigcup_{i \in I\cup J}A_i$ is a cocircuit, $\bigcup_{i \in I\cup J}A_i$ is a hyperplane.
  Therefore, $\bigcup_{i \in I\cup J}A_i=\cl(X)$, and we have $r(M)-1=r(X)\leq|X|=|I|+|J|=m+s-t-1$. Thus, $r(M)\leq m+s-t$. Similarly, by duality, $r^*(M)\leq m-s+t$.
  
  Therefore, we have \[2m=|E(M)|=r(M)+r^*(M)\leq(m+s-t)+(m-s+t)=2m.\] Thus, we must have equality, and the result holds.
\end{proof}

\sloppy
\begin{lemma}
  \label{l:circuits}
  Let $M$ be an $(s,t)$-spike of order~$m$ with associated partition $(A_1,\ldots,A_m)$, and
  let $C$ be a circuit of $M$.
  \begin{enumerate}
    \item
      $C = \bigcup_{j \in J}A_j$ for some $s$-element set $J \subseteq [m]$, or\label{c1}
    \item
      $\left|\{i \in [m] : A_i \cap C \neq \emptyset\}\right| \ge m-(t-2)$ and
      $\left|\{i \in [m] : A_i \subseteq C\}\right| < s$.\label{c2}
  \end{enumerate}
\end{lemma}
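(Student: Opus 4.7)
The plan is to analyze how $C$ interacts with each arm, using orthogonality against the cocircuits of $M$ (which are exactly the unions of $t$ arms). Introduce two index sets: $I = \{i \in [m] : A_i \cap C \neq \emptyset\}$ and $I_0 = \{i \in [m] : A_i \subseteq C\}$, with $I_0 \subseteq I$. I will argue the contrapositive of each inequality in \ref{c2}, in each case forcing \ref{c1}.

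For the second conjunct of \ref{c2}, suppose $|I_0| \ge s$ and pick any $s$-subset $J \subseteq I_0$. Then $\bigcup_{j \in J} A_j$ is a circuit contained in $C$, so by minimality of circuits $C = \bigcup_{j \in J} A_j$ and \ref{c1} holds. Thus I may henceforth assume $|I_0| < s$.

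For the first conjunct of \ref{c2}, suppose for a contradiction that $|I| < m - (t-2)$, i.e., $|[m] - I| \ge t-1$, and pick a $(t-1)$-subset $J' \subseteq [m]-I$. For each $j \in I$, the set $\bigcup_{i \in J' \cup \{j\}} A_i$ is a cocircuit (union of $t$ arms) whose intersection with $C$ lies entirely in $A_j$; by orthogonality $|C \cap A_j| \ne 1$, so $A_j \subseteq C$. Hence $I \subseteq I_0$, forcing $I = I_0$ and $C = \bigcup_{i \in I_0} A_i$. Now extend $I_0$ to an $s$-subset $I_1 \subseteq [m]$ (possible since $m \ge s+t-1 \ge s$ by \cref{tspikeorder}); then $\bigcup_{i \in I_1} A_i$ is a circuit strictly containing $C$ (because the arms are pairwise disjoint nonempty pairs and $I_0 \subsetneq I_1$), contradicting the fact that $C$ is itself a circuit.

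The argument is essentially a clean application of orthogonality plus circuit minimality, so there is no substantial obstacle; the only point requiring care is ensuring that the extension $I_1$ genuinely enlarges $C$, which reduces to the definitional facts that the $A_i$ are pairwise disjoint pairs and $m \ge s$.
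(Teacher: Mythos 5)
Your proof is correct and uses essentially the same ingredients as the paper's: orthogonality between $C$ and cocircuits that are unions of $t$ arms, together with the minimality of circuits (equivalently, that a union of fewer than $s$ arms is independent). The only structural difference is that the paper, under the assumption that the first conjunct of (ii) fails, isolates a single arm $A_j$ with $A_j \not\subseteq C$ and derives one direct orthogonality contradiction, whereas you apply the same cocircuit/orthogonality argument to every $j \in I$ to conclude $I=I_0$ and then obtain a contradiction from the fact that a circuit cannot be properly contained in the circuit formed by extending $I_0$ to $s$ arms; both routes are clean and equally short.
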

\fussy
\begin{proof}
  Let $S = \{i \in [m] : A_i \cap C \neq \emptyset\}$. Thus, $S$ is the minimal subset of $[m]$ such that $C \subseteq \bigcup_{i \in S}A_i$.
  We have $|S| \ge s$ since $C$ is independent otherwise.
  If $|S|=s$, then $C$ satisfies \ref{c1}.
  Therefore, we may assume $|S| > s$.
  We must have $\left|\{i \in [m] : A_i \subseteq C\}\right| < s$; otherwise $C$ properly contains a circuit.
  Thus, there is some $j \in S$ such that $A_j - C \neq \emptyset$.
  If $|S| \ge m-(t-2)$, then $C$ satisfies \ref{c2}.
  Therefore, we may assume $|S| \le m-(t-1)$.
  Let $T = ([m]-S) \cup \{j\}$.  Then $|T|\ge t$, implying that $\bigcup_{i \in T}A_i$ contains a cocircuit intersecting $C$ in one element.
  This contradicts orthogonality.
\end{proof}

In the remainder of the paper, if $(A_1,\ldots,A_m)$ is the associated partition of an $(s,t)$-spike and $J\subseteq[m]$, then we define \[A_J=\bigcup_{j \in J} A_j.\]

\begin{proposition}
  \label{pro:rank-func}
  Let $\pi=(A_1,\ldots,A_m)$ be the associated partition of an $(s,t)$-spike. If $J\subseteq[m]$, then
  \[r(A_J) = 
    \begin{cases}
      2|J| & \textrm{if $|J| < s$,}\\
      s+|J|-1 & \textrm{if $s\leq|J| \leq m-t+1$,}\\
      m+s-t & \textrm{if $|J| \ge m-t+1$.}
    \end{cases}\]
\end{proposition}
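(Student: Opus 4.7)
The plan is to handle the three ranges of $|J|$ separately, using \cref{l:circuits} to pin down which circuits can sit inside $A_J$, \cref{lem:rank-matroid} to bound things from above by $r(M)=m+s-t$, and \cref{tspikeorder} to know $m\geq s+t-1$ (so that $m-t+1\geq s$ and the three ranges overlap consistently at their boundaries).

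For the first case, $|J|<s$, I would argue that $A_J$ is independent. By \cref{l:circuits}, any circuit $C$ of $M$ either equals $A_{J'}$ for some $s$-subset $J'\subseteq[m]$, or meets at least $m-(t-2)$ distinct arms. If $C\subseteq A_J$, the first alternative forces $|J|\geq s$ and the second forces $|J|\geq m-t+2\geq s+1$, both contradicting $|J|<s$. Hence no circuit lies in $A_J$, and $r(A_J)=|A_J|=2|J|$.

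For the middle case, $s\leq|J|\leq m-t+1$, write $J=\{j_1,\ldots,j_k\}$ with $A_{j_i}=\{x_{j_i},y_{j_i}\}$, and I would exhibit the explicit basis
\[B=A_{j_1}\cup\cdots\cup A_{j_{s-1}}\cup\{x_{j_s},x_{j_{s+1}},\ldots,x_{j_k}\}\]
of $M|A_J$, of size $s+k-1$. Spanning: for each $i\in\{s,\ldots,k\}$, the set $A_{j_1}\cup\cdots\cup A_{j_{s-1}}\cup A_{j_i}$ is a union of $s$ arms and hence a circuit, so $y_{j_i}\in\cl(B)$. Independence: by \cref{l:circuits}, the hypothesis $|J|\leq m-t+1<m-(t-2)$ forces every circuit contained in $A_J$ to be a union of exactly $s$ arms from $J$; but $B$ contains only $s-1$ arms in their entirety, so $B$ is circuit-free. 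Hence $r(A_J)=|B|=s+|J|-1$.

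For the final case, $|J|\geq m-t+1$, I would pick any $J'\subseteq J$ with $|J'|=m-t+1$; by the middle case, $r(A_{J'})=s+(m-t+1)-1=m+s-t$. Monotonicity of rank yields $r(A_J)\geq m+s-t$, while \cref{lem:rank-matroid} gives $r(A_J)\leq r(M)=m+s-t$, so equality holds. The one subtle point is the independence argument in the middle case, which relies on the strict inequality $|J|\leq m-t+1<m-(t-2)$ to exclude the ``non-arm-union'' alternative of \cref{l:circuits}; once that observation is in place, the remaining verifications are routine.
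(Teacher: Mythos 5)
Your proof is correct, and the middle case takes a genuinely different route from the paper. Where you exhibit an explicit basis $B=A_{j_1}\cup\cdots\cup A_{j_{s-1}}\cup\{x_{j_s},\ldots,x_{j_k}\}$ and verify spanning and independence directly, the paper instead proceeds by induction on $|J|$, starting from the base case $|J|=s$ (where $A_J$ is a circuit, giving rank $2s-1$) and adding one arm $\{x_i,y_i\}$ at a time, using \cref{l:circuits} to see that $x_i\notin\cl(A_{J'})$ while $y_i\in\cl(A_{J'}\cup\{x_i\})$, so the rank increases by exactly one per arm. Both arguments rest on the same crucial consequence of \cref{l:circuits} — that every circuit inside $A_J$ with $|J|\le m-t+1$ must be a union of $s$ arms — so they are of comparable depth; yours is arguably more transparent about what a basis actually looks like, while the paper's inductive bookkeeping is slightly shorter to write. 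Your handling of the first case is also a bit heavier than necessary: the paper simply notes that $A_J$ is a proper subset of a union of $s$ arms (a circuit), hence independent, without invoking \cref{l:circuits}. The third case is handled the same way in both (reduce to $|J'|=m-t+1$, then sandwich with $r(M)=m+s-t$ from \cref{lem:rank-matroid}), and your remark that \cref{tspikeorder} is needed so that $m-t+1\geq s$ is the right observation to keep the case boundaries consistent.
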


\begin{proof}
  If $|J|<s$, then $A_J$  is properly contained in a circuit and is therefore independent. Thus, $r(A_J)=|A_J|=2|J|$.
  
  We now prove that $r(A_J)=s+|J|-1$ if $s\leq|J| \leq m-t+1$. We proceed by induction on $|J|$. As a base case, if $|J|=s$, then $A_J$ is a circuit. Therefore, $r(A_J)=|A_J|-1=s+|J|-1$. Now, for the inductive step, let $s<|J|\leq m-t+1$, and let $J'\subseteq J$ with $|J'|=|J|-1$. By induction, $r(A_{J'})=s+|J|-2$. Let $\{x_i,y_i\}=A_J-A_{J'}$. By \cref{l:circuits}, since $|J|<m-t+2$, there is no circuit $C$ such that $x_i\in C\subseteq A_{J'}\cup\{x_i\}$. Therefore, $x_i\notin\cl(A_{J'})$, and $r(A_{J'}\cup\{x_i\})=r(A_{J'})+1$. On the other hand, since $|J|>s$, there is a circuit $C$ such that $y_i\in C\subseteq A_{J}$. Therefore, $y_i\in\cl(A_{J'}\cup\{x_i\})$, and $r(A_J)=r(A_{J'})+1=s+|J|-1$.
  
  Note that the preceding argument, along with \cref{lem:rank-matroid} implies that, if $|J|=m-t+1$, then $A_J$ is spanning. Thus, if $|J|\geq m-t+1$, then $r(A_J)=r(M)=m+s-t$.
\end{proof}

\subsection*{Connectivity}

Let $M$ be a matroid with ground set $E$.
Recall that the \emph{connectivity function} of $M$, denoted by $\lambda$, is defined as 
\begin{align*}
  \lambda(X) = r(X) + r(E - X) - r(M),
\end{align*}
for all subsets $X$ of $E$. In the case where $M$ is an $(s,t)$-spike of order $m$ and $X=A_J$ for some set $J\subseteq[m]$, this implies
\begin{align*}
  \lambda(A_J) = r(A_J) + r(A_{[m]-J}) - r(M).
\end{align*}

Therefore, \cref{pro:rank-func} allows us to easily compute $\lambda(A_J)$.

\begin{lemma}
  \label{lem:conn}
  Let $\pi=(A_1,\ldots,A_m)$ be the associated partition of an $(s,t)$-spike, and let $(J,K)$ be a partition of $[m]$ with $|J| \le |K|$.
  \begin{enumerate}
      \item If $|J|\leq t-1$, then $\lambda(A_J)=r(A_J)$.
      \item If $t-1\leq|J|\leq m-s$, then \[\lambda(A_J)=
      \begin{cases}
      t+|J|-1 & \textrm{if $|J| < s$,}\\
      s+t-2 & \textrm{if $s\leq|J|\leq m-t+1$.}
    \end{cases}\]
    \item If $|J|> m-s$, then $\lambda(A_J)=m-s+t$.
  \end{enumerate}
\end{lemma}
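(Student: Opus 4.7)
\medskip

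The plan is to compute $\lambda(A_J) = r(A_J) + r(A_K) - r(M)$ directly, where $K = [m] - J$, using the rank formula from \cref{pro:rank-func} for both $r(A_J)$ and $r(A_K)$, together with $r(M) = m+s-t$ from \cref{lem:rank-matroid}. The casework follows the statement. Throughout, $|K| = m - |J|$.

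First I would handle Case~(i): if $|J| \le t-1$, then $|K| \ge m-t+1$, so \cref{pro:rank-func} gives $r(A_K) = m+s-t = r(M)$. Hence $\lambda(A_J) = r(A_J) + r(M) - r(M) = r(A_J)$, as claimed.

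Next, Case~(ii): if $t-1 \le |J| \le m-s$, then $s \le |K| \le m-t+1$, so $r(A_K) = s + |K| - 1 = s + m - |J| - 1$. If $|J| < s$, then $r(A_J) = 2|J|$, and the three terms telescope to $\lambda(A_J) = 2|J| + (s+m-|J|-1) - (m+s-t) = |J| + t - 1$. If $s \le |J| \le m-t+1$, then $r(A_J) = s+|J|-1$, and $\lambda(A_J) = (s+|J|-1) + (s+m-|J|-1) - (m+s-t) = s+t-2$.

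Finally, Case~(iii): suppose $|J| > m-s$. The step I expect to be mildly subtle is noticing that the standing hypothesis $|J| \le |K|$ now forces $|J| \le m/2$; combined with $|J| > m - s$ this yields $s > m/2 \ge |J|$, so $|J| < s$ and hence also $|K| < s$. Therefore both $r(A_J) = 2|J|$ and $r(A_K) = 2|K| = 2(m-|J|)$ by the first clause of \cref{pro:rank-func}, which gives $\lambda(A_J) = 2|J| + 2(m-|J|) - (m+s-t) = m - s + t$, completing the proof. The only real obstacle is keeping track of which clause of the rank formula applies to each of $A_J$ and $A_K$; once one unpacks the inequalities imposed by $|J| \le |K|$ in Case~(iii), the rest is arithmetic.
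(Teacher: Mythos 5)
Your proof is correct and follows essentially the same approach as the paper: compute $\lambda(A_J) = r(A_J) + r(A_K) - r(M)$ using \cref{pro:rank-func} and \cref{lem:rank-matroid}, splitting into cases based on which clause of the rank formula applies. In Case~(iii), the paper reaches $|J| < s$ slightly more directly (from $|J| > m-s$ get $|K| < s$, then $|K| \ge |J|$ gives $|J| < s$), whereas your route through $|J| \le m/2$ is a little longer and your ``hence also $|K| < s$'' actually follows from $|J| > m-s$ rather than from $|J| < s$; these are cosmetic, not substantive, differences.
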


\begin{proof}
If $|J|\leq t-1$, then $|K|\geq m-t+1$. Therefore, $A_K$ is spanning, and $\lambda(A_J)=r(A_J)+r(A_K)-r(M)=r(A_J)$. Statement (i) follows.

If $t-1\leq|J|\leq m-s$, then $s\leq|K|\leq m-t+1$. Therefore, $\lambda(A_J)=r(A_J)+r(A_K)-r(M)=r(A_J)+s+m-|J|-1-(m+s-t)$. Statement (ii) follows. (Note that we cannot have $|J|>m-t+1$ because otherwise $|K|<t-1\leq|J|$.)

If $|J|> m-s$, then $s>|K|\geq|J|$. Therefore, $\lambda(A_J)=r(A_J)+r(A_K)-r(M)=2|J|+2(m-|J|)-(m+s-t)=m-s+t$. Statement (iii) follows. 
\end{proof}

Using the terminology of~\cite{ao2008}, \cref{lem:conn} implies the following.

\begin{proposition}
\label{pro:anemone}
Let $(A_1,\dotsc,A_m)$ be the associated partition of an $(s,t)$-spike~$M$, and suppose that $(P_1,\dotsc,P_k)$ is a partition of $E(M)$ such that, for each $i \in [k]$, $P_i = \bigcup_{i \in I}A_i$ for some subset $I$ of $[m]$, with $|I| \ge \max\{s-1,t-1\}$. Then $(P_1,\dotsc,P_k)$ is an $(s+t-1)$-anemone.
\end{proposition}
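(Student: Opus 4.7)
The plan is to invoke \cref{lem:conn} to compute $\lambda$ for every nontrivial union of petals and verify it agrees with the defining connectivity of an $(s+t-1)$-anemone, namely $\lambda=s+t-2$. Each petal has the form $P_i = A_{I_i}$, so every nontrivial union of petals is $A_J$ for some proper nonempty $J \subseteq [m]$. The hypothesis $|I_i| \ge \max\{s-1,t-1\}$ guarantees that both $|J|$ and $|[m]-J|$ are at least $\max\{s-1,t-1\}$, and after exchanging $J$ with $[m]-J$ I may assume $|J| \le m/2$, so that \cref{lem:conn} applies in its stated orientation.

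The remainder is a short case analysis. If $|J| \ge \max\{s,t\}$, then $|J| \ge s$ and $m \ge 2|J| \ge 2s$, so $|J| \le m-s$, placing us in \cref{lem:conn}(ii) with $|J| \ge s$, which returns $\lambda(A_J) = s+t-2$ at once. Otherwise $|J|$ equals $\max\{s-1,t-1\}$. When this maximum is $t-1$ (so $t \ge s$), \cref{lem:conn}(i) gives $\lambda(A_J) = r(A_J)$, and \cref{pro:rank-func} evaluates this to $2t-2$ when $t=s$ and to $s+t-2$ when $t>s$; both reduce to $s+t-2$. When the maximum is $s-1$ with $s > t$, either $|J| \le m-s$, so \cref{lem:conn}(ii) yields $\lambda(A_J) = t+|J|-1 = s+t-2$, or $|J| > m-s$, in which case the simultaneous constraints $s-1 \le m/2$ and $s-1 > m-s$ force $m = 2s-2$, and then \cref{lem:conn}(iii) gives $\lambda(A_J) = m-s+t = s+t-2$.

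The main bookkeeping obstacle to watch for is the degenerate branch \cref{lem:conn}(iii), whose formula $m-s+t$ does not generically match $s+t-2$. The subtlety is that in our setting, the lower bound $|J| \ge \max\{s-1,t-1\}$ combined with $|J| \le m/2$ forces $m=2s-2$ precisely when \cref{lem:conn}(iii) is triggered, and in exactly that situation $m-s+t$ collapses to $s+t-2$. The role of the hypothesis on $|I_i|$ is to rule out all the regimes in which \cref{lem:conn} would return a different value for $\lambda(A_J)$.
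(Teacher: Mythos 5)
Your proof is correct and takes the same approach the paper intends: the paper offers no explicit argument beyond ``\cref{lem:conn} implies the following,'' and your case analysis is a careful, complete verification that $\lambda(A_J)=s+t-2$ for every nontrivial union of petals, including the boundary case $m=2s-2$ where \cref{lem:conn}(iii) is triggered.
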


We now continue our study of the connectivity of $(s,t)$-spikes.

\begin{lemma}
\label{ind-and-coind}
Let $M$ be an $(s,t)$-spike of order $m\geq3\max\{s,t\}-2$, and let $X\subseteq E(M)$ such that $|X|\leq2\min\{s,t\}-1$. Then $\lambda(X)=|X|$.
\end{lemma}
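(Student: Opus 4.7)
The plan is to observe that $\lambda(X) = |X|$ follows immediately once we establish two things: $X$ is independent in $M$, and $X$ is coindependent in $M$ (equivalently, $E(M)-X$ is spanning). Indeed, independence gives $r(X)=|X|$, coindependence gives $r(E(M)-X)=r(M)$, and then the definition $\lambda(X)=r(X)+r(E(M)-X)-r(M)$ yields $\lambda(X)=|X|$. So the whole task reduces to bounding the girth (and, dually, the cogirth) of $M$ from below by $2\min\{s,t\}$.

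To bound the girth, I would apply \cref{l:circuits} to an arbitrary circuit $C$ of $M$. In case \ref{c1}, $C$ is a union of exactly $s$ arms, so $|C|=2s\geq 2\min\{s,t\}$. In case \ref{c2}, $C$ meets at least $m-(t-2)$ arms, so $|C|\geq m-t+2$. Using the hypothesis $m\geq 3\max\{s,t\}-2$, this gives $|C|\geq 3\max\{s,t\}-t$. A short case split finishes this: if $s\geq t$, then $3s-t\geq 3t-t=2t=2\min\{s,t\}$; if $t\geq s$, then $3t-t=2t\geq 2s=2\min\{s,t\}$. In every case $|C|\geq 2\min\{s,t\}$, so since $|X|\leq 2\min\{s,t\}-1$, the set $X$ contains no circuit, and hence $r(X)=|X|$.

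For coindependence, I would invoke duality. The matroid $M^*$ is a $(t,s)$-spike of order $m$ with the same associated partition, and both hypotheses $|X|\leq 2\min\{s,t\}-1=2\min\{t,s\}-1$ and $m\geq 3\max\{s,t\}-2=3\max\{t,s\}-2$ are symmetric in $s$ and $t$. Therefore the argument above, applied to $M^*$, shows that $X$ is independent in $M^*$, i.e.\ $E(M)-X$ is spanning in $M$, so $r(E(M)-X)=r(M)$. Combining gives $\lambda(X)=|X|+r(M)-r(M)=|X|$.

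The only mildly delicate part is the arithmetic verifying that $m-t+2\geq 2\min\{s,t\}$ for both possible orderings of $s$ and $t$, which is precisely why the hypothesis involves $\max\{s,t\}$ and is tight. Everything else is a direct application of \cref{l:circuits} and duality, so I expect no serious obstacle.
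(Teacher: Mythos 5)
Your proof is correct and follows essentially the same route as the paper: apply \cref{l:circuits} to show $X$ is independent (bounding the size of any circuit from below), then invoke duality to obtain coindependence, and combine via the definition of $\lambda$. The paper bounds circuit sizes below by $2s$ in both cases and concludes from $|X| < 2s$, while you phrase it as a girth bound $2\min\{s,t\}$, but these are the same computation.
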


\begin{proof}
By Lemma \ref{l:circuits}, if $X$ is dependent, then either $|X|=2s$ or $|X|\geq m-t+2\geq 3\max\{s,t\}-2-t+2=3\max\{s,t\}-t\geq2\max\{s,t\}\geq2s$. However, $|X|\leq2\min\{s,t\}-1<2s$. Therefore, $X$ is independent, which implies that $r(X)=|X|$.

By a similar argument, using the dual of \cref{l:circuits}, $X$ is coindependent, implying that $r(E(M)-X)=r(M)$. Therefore, 
  \begin{align*}
  \lambda(X)&=r(X)+r(E(M)-X)-r(M)\\
  &=|X|+r(M)-r(M)\\
  &=|X|,
  \end{align*} proving the lemma.
\end{proof}

\begin{theorem}
Let $M$ be an $(s,t)$-spike of order \[m\geq\max\{3s+t,s+3t\}-4,\] where $\min\{s,t\}\geq2$. Then $M$ is $(2\min\{s,t\}-1)$-connected.
\end{theorem}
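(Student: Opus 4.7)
My plan is to first apply matroid duality---the dual of an $(s,t)$-spike is a $(t,s)$-spike and $k$-connectivity is self-dual---to reduce to the case $s\leq t$, so that $\min\{s,t\}=s$ and the target is $k$-connectivity for $k=2s-1$. Equivalently, I will show $\lambda(X)\geq\min\{|X|,|E(M)-X|,2s-2\}$ for every $X\subsetneq E(M)$. The case $\min\{|X|,|E(M)-X|\}\leq 2s-1$ follows immediately from \cref{ind-and-coind}, so the substance is to establish $\lambda(X)\geq 2s-2$ whenever $|X|,|E(M)-X|\geq 2s$.

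For such an $X$ I would classify the arms according to their intersection with $X$: let $J=\{i:A_i\subseteq X\}$, $K=\{i:A_i\subseteq E(M)-X\}$, $S=[m]-(J\cup K)$, and put $a=|J|$, $b=|K|$, $p=|S|$. The key structural observation, derivable from \cref{l:circuits}, is that when $b\geq t-1$ no circuit of $M$ contained in $X$ meets $\{x_i:i\in S\}$: a union-of-arms circuit through $x_i$ would force $y_i$ to lie in $X$, while any circuit of the second type in \cref{l:circuits} meets at least $m-t+2$ arms and so does not fit in $X$ because $|J\cup S|=a+p\leq m-t+1$. Hence $r(X)=r(A_J)+p$ when $b\geq t-1$, and symmetrically $r(E(M)-X)=r(A_K)+p$ when $a\geq t-1$. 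Together with \cref{pro:rank-func}, this handles the ``balanced'' case $a,b\geq t-1$: for $a,b\geq s$ a direct computation gives $\lambda(X)=s+t-2+p\geq 2s-2$, and the remaining subcases (which force $s=t$) yield even larger values. The ``singly unbalanced'' case, say $a\leq t-2$ with $b\geq t-1$, is handled by noting that $b\geq s-1$, so each $x_i$ with $i\in S$ lies in $\cl(E(M)-X)$ via a union-of-$s$-arms circuit $A_{K'\cup\{i\}}$ for some $(s-1)$-subset $K'$ of $K$; then $A_{K\cup S}\subseteq\cl(E(M)-X)$ and, since $b+p\geq m-t+2$, this set spans $M$, giving $\lambda(X)=r(X)\geq 2s-1$.

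The main obstacle is the ``doubly unbalanced'' case $a,b\leq t-2$, in which neither rank formula applies directly. Here my plan is to reduce to the balanced case via single-element moves, using the inequality $|\lambda(X)-\lambda(X\triangle\{e\})|\leq 1$: starting from $X$, successively add $t-1-a$ elements $y_i$ (for distinct $i\in S$) to $X$ and then remove $t-1-b$ elements $x_i$ (for distinct remaining $i\in S$) from $X$, producing a set $X^*$ with $a^*=b^*=t-1$ and $p^*=m-2t+2$; these moves are feasible because $m\geq s+3t-4\geq 2t-2$ forces $p\geq 2t-2-a-b$. The balanced-case analysis applied to $X^*$ yields $\lambda(X^*)\geq m+s-t$, so
\[\lambda(X)\geq\lambda(X^*)-(2t-2-a-b)\geq m+s-3t+2+a+b\geq 2s-2,\]
where the last inequality uses $m\geq s+3t-4$. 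Hence $\lambda(X)\geq 2s-2$ in every case, completing the proof.
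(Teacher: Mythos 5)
Your proof is correct, and it uses the same basic toolkit as the paper (Lemma~\ref{l:circuits}, Proposition~\ref{pro:rank-func}, Lemma~\ref{ind-and-coind}, and the duality reduction), but it is organised differently and diverges substantively in one place. The paper also starts from a hypothetical $k$-separation $(P,Q)$ with $|P|\ge|Q|$ and splits on whether the larger side $P$ contains at least $t-1$ full arms. When it does, the paper uses the coclosure observation (equivalent to your ``$x_i$ is a coloop of $M|X$ when the other side has $\ge t-1$ full arms'') to move \emph{all} partial arms into $P$, reducing to a separation by unions of arms, and then applies Lemma~\ref{lem:conn} directly or, if the shrunken side is small, trims it to exactly $2t-2$ elements and invokes Lemma~\ref{ind-and-coind}; your ``balanced'' and ``singly unbalanced'' cases correspond to this, though you argue the singly unbalanced case by showing $E(M)-X$ spans so that $\lambda(X)=r(X)$, which is tidier. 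The real divergence is your ``doubly unbalanced'' case ($a,b\le t-2$): you reach the bound via $|X\triangle X^*|$ single-element perturbations to a balanced $X^*$, paying a loss of one per move. The paper instead observes that when neither side contains $t-1$ full arms, each side meets more than $m-(t-1)$ arms, so a transversal argument via Lemma~\ref{l:circuits} gives $r(P),r(Q)\ge m-t+1$ directly and then a one-line computation yields $\lambda(P)\ge m-s-t+2\ge 2s-2$; this is shorter and avoids the perturbation bookkeeping, and is probably the way you'd want to write that case. One small inaccuracy in your write-up: in the balanced case the subcases with $a=t-1=s-1$ or $b=t-1=s-1$ (forcing $s=t$) give $\lambda(X)=s+t-2+p$ exactly, not ``even larger values''; fortunately this is still $\ge 2s-2$, so nothing breaks.
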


\begin{proof}
Because $M^*$ is a $(t,s)$-spike and because $\lambda_{M^*}=\lambda_M$, we may assume without loss of generality that $t\leq s$. Note that $\max\{3s+t,s+3t\}=3\max\{s,t\}+\min\{s,t\}$. Therefore, $m\geq3s+t-4$, and we must show that $M$ is $(2t-1)$-connected.

Now, suppose for a contradiction that $M$ is not $(2t-1)$-connected. Then there is a $k$-separation $(P,Q)$ of $M$, with $|P|\geq|Q|$, for some $k<2t-1$. Therefore, $\lambda(P)=\lambda(Q)<k\leq2t-2$.

First, we consider the case where $A_I \subseteq P$, for some $(t-1)$-element set $I \subseteq [m]$. Let $U = \{u \in [m] : |P \cap A_u|= 1\}$. Then $A_j \subseteq \cl_{M^*}(P)$ for each $j \in U$. For such a $j$, it follows, by the definition of $\lambda_{M^*}$ (which is equal to $\lambda_M=\lambda$), that $\lambda(P \cup A_j) \le \lambda(P)$. We use this repeatedly below; in particular, we see that $\lambda(P\cup A_U)\leq\lambda(P)$.

Let $P' = P\cup A_U$, and let $Q' = E(M)-P'$. Then there is a partition $(J,K)$ of $[m]$, with $|J|\leq|K|$, such that $Q'=A_J$ and $P'=A_K$. Moreover, $\lambda(Q')=\lambda(P')\leq\lambda(P)$.

Suppose $|J|\geq t-1$. Note that $m\geq3s+t-4\geq2s$ since $\min\{s,t\}\geq2$. Therefore, $|J|\leq\frac{1}{2}m=m-\frac{1}{2}m\leq m-\frac{1}{2}(2s)=m-s$. Thus, to determine $\lambda(Q')$, we need only consider Lemma \ref{lem:conn}(ii). If $|J|\geq s$, then by Lemma \ref{lem:conn}(ii), \[\lambda(P)\geq\lambda(P')=\lambda(Q')=s+t-2\geq2t-2,\] a contradiction. Otherwise, $|J|<s$, implying by Lemma \ref{lem:conn}(ii) that \[\lambda(P)\geq\lambda(P')=\lambda(Q')=t+|J|-1\geq t+t-1-1=2t-2,\] another contradiction.

Therefore, $|J|<t-1$. Let
$U'\subseteq U$ such that $|U'|=|Q|-(2t-2)$. Then $\lambda(P) \ge \lambda\left(P \cup A_{U'}\right) = \lambda\left(Q- A_{U'}\right)$. Since $\left|Q- A_{U'}\right| = 2t-2$ and $m\geq3s+t-4\geq3s-2$, \cref{ind-and-coind} implies that $\lambda\left(Q-A_{U'}\right)=2t-2$, so $\lambda(P) \ge 2t-2$, a contradiction.

Now we consider the case that $|\{i \in [m] : A_i \subseteq P\}| < t-1$. Since $|Q| \le |P|$, it follows that $|\{i \in [m] : A_i \subseteq Q\}| \le |\{i \in [m] : A_i \subseteq P\}| < t-1<s$.

Now, since $|\{i \in [m] : A_i \subseteq P\}| < t-1$, we have $|\{i \in [m] : A_i \cap Q \neq \emptyset\}| > m-(t-1)$. Therefore, $r(Q) \ge m-(t-1)$ by \cref{l:circuits}. Similarly, $r(P) \ge m-(t-1)$. Thus, 
  \begin{align*}
    \lambda(P) &= r(P) + r(Q) - r(M) \\
    &\ge (m-(t-1)) + (m-(t-1)) - (m+s-t) \\
    &=m-s-t+2 \\
    &\ge 3s+t-4-s-t+2 \\ 
    &= 2s-2\\
    &\ge 2t-2,
  \end{align*}
  a contradiction. This completes the proof.
\end{proof}

\subsection*{Constructions}
In \cite{bccgw2019}, a construction is described that, starting from a $(t,t)$-spike $M_0$, obtains a $(t+1,t+1)$-spike $M_1$. This construction consists of a certain elementary quotient $M_0'$ of $M_0$, followed by a certain elementary lift $M_1$ of $M_0'$. It is shown in \cite{bccgw2019} that $M_1$ is a $(t+1,t+1)$-spike as long as the order of $M_0$ is sufficiently large. 

In the process of constructing $M_1$ in this way, the intermediary matroid $M_0'$ is a $(t,t+1)$-spike. For the sake of completeness, we will review this construction in the more general case where $M_0$ is an $(s,t)$-spike, in which case $M_0'$ is an $(s,t+1)$-spike. To construct an $(s+1,t)$-spike, we perform the construction on $M^*$ and dualize. Since $(2,2)$-spikes (and indeed, $(1,1)$-spikes) are well known to exist, this means that $(s,t)$-spikes exist for all positive integers $s$ and $t$.

It is also shown in \cite{bccgw2019} that all $(t,t)$-spikes can be constructed in this manner. We also extend this to the general case of $(s,t)$-spikes below.

Recall that $M_1$ is an \emph{elementary quotient} of $M_0$ if there is a single-element extension $M^+_0$ of $M_0$ by an element~$e$ such that $M_1 = M^+_0 / e$.
If $M_1$ is an elementary quotient of $M_0$, then $M_0$ is an \emph{elementary lift} of $M_1$. Also, note that if $M_1$ is an elementary lift of $M_0$, then $M_1^*$ is an elementary quotient of $M_0^*$.

\begin{construction}
\label{cons:quotient}
Let $M$ be an $(s,t)$-spike of order~$m \ge s+t$, with associated partition $\pi$.
Let $M+e$ be a single-element extension of $M$ by an element $e$ such that $e$
blocks each $2t$-element cocircuit that is a union of $t$ arms of $M$.  Then let $M'=(M+e)/e$.
\end{construction}

In other words, $M+e$ has the property that $e\notin \cl_{M+e}(E(M)-C^*)$ for every $2t$-element cocircuit $C^*$ that is the union of $t$ arms.
Note that one possibility is that $M+e$ is the free extension of $M$ by an element $e$.
Since $m-t\geq s$, we have $e\notin\cl_{M+e}(C)$ for each $2s$-element circuit $C$. Thus, in $M'$, the union of any $s$ arms of the $(s,t)$-spike $M$ is still a circuit of $M'$. However, since $r(M') = r(M) - 1$, the union of any $t+1$ arms is a $2(t+1)$-element cocircuit. Therefore, $M'$ is an $(s,t+1)$-spike.

Note that $M'$ is not unique; more than one $(s,t+1)$-spike can be constructed from a given $(s,t)$-spike $M$ using \cref{cons:quotient}. Given an $(s+1,t)$-spike~$M'$, we will describe how to obtain an $(s,t)$-spike~$M$ from $M'$ by a specific elementary quotient. This process reverses the dual of \cref{cons:quotient}. This will then imply that every $(s,t)$-spike can be constructed from a $(1,1)$-spike by repeated use of \cref{cons:quotient} and its dual. \cref{modcut} describes the single-element extension that gives rise to the elementary quotient we desire.
Intuitively, the extension adds a ``tip'' to the $(s,t)$-spike. In the proof of this lemma, we assume knowledge of the theory of modular cuts (see \cite[Section~7.2]{oxbook}).

The proof of \cref{modcut} will be very similar to the proof of \cite[Lemma 6.6]{bccgw2019}. However, we note that \cite[Lemma 6.6]{bccgw2019} is falsely stated; what is proven in \cite{bccgw2019} is essentially the specialisation of \cref{modcut}, below, in the case that $s=t$.
The statement of \cite[Lemma 6.6]{bccgw2019} replaces the condition that $M$ is a $(t,t)$-spike with the weaker condition that $M$ has a $t$-echidna. To demonstrate that this is overly general, consider the rank-$3$ matroid consisting of two disjoint lines with four points. Let these lines be $\{a,b,c,d\}$ and $\{w,x,y,z\}$. Then $(\{a,b\},\{w,x\})$ is a $2$-echidna of order $2$. For \cite[Lemma 6.6]{bccgw2019} to be true, we would need a single-element extension $M^+$ by an element $e$ such that $e\in\cl_{M^+}(\{a,b\})$ but $e\notin\cl_{M^+}(\{c,d\})$. This is impossible since $\cl_M(\{a,b\})=\cl_M(\{c,d\})$.

\begin{lemma}
  \label{modcut}
  Let $M$ be an $(s,t)$-spike.
  There is a single-element extension $M^+$ of $M$ by an element $e$ having the property that, for every $X \subseteq E(M)$, $e \in \cl_{M^+}(X)$ if and only if $X$ contains at least $s-1$  arms of $M$.
\end{lemma}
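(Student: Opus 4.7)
The proof will use the theory of modular cuts (see \cite[Section~7.2]{oxbook}). Let $(A_1, \dotsc, A_m)$ be the associated partition of the $(s,t)$-spike $M$. I define
\[
\mathcal{M} = \bigl\{F : F \text{ is a flat of } M \text{ with } A_J \subseteq F \text{ for some } (s-1)\text{-subset } J \text{ of } [m]\bigr\}
\]
and plan to show that $\mathcal{M}$ is a modular cut; the desired extension $M^+$ will then be the single-element extension of $M$ associated to $\mathcal{M}$. Upward closure of $\mathcal{M}$ is immediate. A key preliminary observation is that no flat in $\mathcal{M}$ contains a partial arm: if $A_J \subseteq F$ with $|J| = s-1$ and $x_i \in F$ with $i \notin J$, then $A_{J \cup \{i\}}$ is a $2s$-element circuit of $M$ with $A_{J \cup \{i\}} - \{y_i\} \subseteq F$, forcing $y_i \in \cl_M(F) = F$, hence $A_i \subseteq F$. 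Consequently, every flat $F \in \mathcal{M}$ has the form $F = A_{K(F)}$ where $K(F) = \{i : A_i \subseteq F\}$ satisfies $|K(F)| \ge s-1$, or else $F = E(M)$.

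The heart of the argument is verifying the modular-pair axiom. Suppose $F_1 = A_{K_1}$ and $F_2 = A_{K_2}$ are in $\mathcal{M}$, form a modular pair, and $F_1 \cap F_2 = A_{K_1 \cap K_2} \notin \mathcal{M}$; write $c = |K_1 \cap K_2| \le s-2$ and $d = |K_1 \cup K_2| \ge s$. Using \cref{pro:rank-func} one obtains
\[
r(F_i) = s + |K_i| - 1, \quad r(F_1 \cap F_2) = 2c, \quad r(F_1 \vee F_2) = \min\bigl(s + d - 1,\, m + s - t\bigr).
\]
Substituting into $r(F_1) + r(F_2) = r(F_1 \cap F_2) + r(F_1 \vee F_2)$ and using $|K_1| + |K_2| = c + d$, a direct calculation in either branch of the minimum yields $c \ge s - 1$, a contradiction. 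Hence $\mathcal{M}$ is a modular cut.

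By standard modular cut theory, there is then a unique single-element extension $M^+ = M + e$ of $M$ such that $e \in \cl_{M^+}(X)$ if and only if $\cl_M(X) \in \mathcal{M}$. The remaining step is to check that this condition on $\cl_M(X)$ is equivalent to ``$X$ contains at least $s-1$ arms of $M$''. The forward direction is immediate. For the converse, if $\cl_M(X) \supseteq A_J$ with $|J| = s-1$ but $A_J \not\subseteq X$, pick $y_i \in A_J - X$: there is a circuit $C$ with $y_i \in C \subseteq X \cup \{y_i\}$, and by \cref{l:circuits} either $C = A_{J'}$ is a union of $s$ arms, in which case $A_{J' - \{i\}} \subseteq X$ already provides $s-1$ arms in $X$; or $C$ is a large type-2 circuit, which I would exclude using orthogonality with the cocircuits arising from the $t$-coechidna structure of $M$.

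The main technical work is the modular-pair axiom; the critical structural insight that a flat in $\mathcal{M}$ contains no partial arm is exactly what the full $(s,t)$-spike hypothesis (the $s$-echidna supplying the necessary $2s$-element circuits) buys beyond a mere $s$-echidna, and is what rescues the argument from the error in \cite[Lemma 6.6]{bccgw2019}.
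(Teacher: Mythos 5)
Your construction of the modular cut $\mathcal{M}$ (flats containing at least $s-1$ arms), the observation that any $F \in \mathcal{M}$ is a union of arms, and the rank computation ruling out modular pairs $F_1=A_{K_1}$, $F_2=A_{K_2}$ with $|K_1\cap K_2|\le s-2$ all match the paper's proof; indeed your uniform treatment via $r(F_1\vee F_2)=\min(s+d-1,m+s-t)$ is cleaner than the paper's three-case split (which contains a typo ``$m-t-1$''). One small point you should make explicit: the formula $r(F_i)=s+|K_i|-1$ from \cref{pro:rank-func} requires $|K_i|\le m-t+1$; this holds because a \emph{proper} flat $A_{K_i}$ must have $|K_i|\le m-t$ (otherwise $A_{K_i}$ would be spanning), and you should dispose of the trivial case $F_i=E(M)$ before invoking the formula.

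The genuine gap is in your last paragraph. The statement ``$C$ is a large type-2 circuit, which I would exclude using orthogonality with the cocircuits arising from the $t$-coechidna structure'' is an intention, not an argument, and it cannot be completed as stated. Once $\cl_M(X)=E(M)$, the condition $\cl_M(X)\in\mathcal{M}$ is automatic, so $e\in\cl_{M^+}(X)$ for every spanning set $X$, no matter how few full arms $X$ contains; for instance a transversal basis of a free $(2,2)$-spike is spanning and contains no full arm at all, so no extension whatsoever can satisfy the advertised ``only if''. (For a proper flat $\cl_M(X)=A_K$ the claim does hold and can be proved by a counting argument: $|X|\ge r(A_K)=s+|K|-1$ and $|A_K|=2|K|$, so $X$ misses at most $|K|-s+1$ elements of $A_K$ and hence contains at least $s-1$ full arms.) To be fair, the paper's own proof does not address this translation either—it simply cites Oxley's Theorem 7.2.3, which yields ``$e\in\cl_{M^+}(X)$ iff $\cl_M(X)\in\mathcal{M}$'', a weaker statement than the lemma as written. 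So your proof matches the paper's in substance and inherits the same weak spot; you should either restrict the ``only if'' to non-spanning $X$ (which is all that is used in the paper's subsequent construction) or replace your unfinished orthogonality argument with the counting argument above for the proper-flat case and acknowledge the spanning case separately.
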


\begin{proof}
Since $M$ is an $(s,t)$-spike, there is a partition $\pi=(S_1,\dotsc,S_m)$ of $E(M)$ that is both an $s$-echidna and a $t$-coechidna. Let $$\mathcal{F} = \left\{\bigcup_{i\in I}S_i : I \subseteq [m] \textrm{ and } |I|=s-1\right\}.$$ By the definition of an $s$-echidna, $\mathcal{F}$ is a collection of flats of $M$. Let $\mathcal{M}$ be the set of all flats of $M$ containing some flat $F \in \mathcal{F}$. We claim that $\mathcal{M}$ is a modular cut. Recall that, for distinct $F_1,F_2 \in \mathcal{M}$, the pair $(F_1,F_2)$ is \emph{modular} if $r(F_1) + r(F_2) = r(F_1 \cup F_2) + r(F_1 \cap F_2)$. To show that $\mathcal{M}$ is a modular cut, it suffices to prove that, for any $F_1,F_2 \in \mathcal{M}$ such that $(F_1,F_2)$ is a modular pair, $F_1 \cap F_2 \in \mathcal{M}$.

For any $F \in \mathcal{M}$, since $F$ contains at least $s-1$ arms of $M$, and the union of any $s$ arms is a circuit, it follows that $F$ is a union of arms of $M$. Thus, let $F_1,F_2 \in \mathcal{M}$ be such that $F_1=\bigcup_{i\in I_1}S_i$ and $F_2=\bigcup_{i\in I_2}S_i$, where $I_1$ and $I_2$ are distinct subsets of $[m]$ with $u_1=|I_1| \ge s-1$ and $u_2=|I_2|\ge s-1$. 

Let $q=|I_1 \cap I_2|$. Then $F_1 \cup F_2$ is the union of $u_1 + u_2 - q \ge s-1$ arms, and $F_1\cap F_2$ is the union of $q$ arms. We show that if $q<s-1$, then $(F_1,F_2)$ is not a modular pair.

We consider several cases. First, suppose $u_1,u_2\leq m-t+1$. By \cref{pro:rank-func}, \begin{align*}
    r(F_1) + r(F_2) &= (s + u_1 - 1) + (s + u_2 - 1) \\
    &>
    (s-1 + u_1 + u_2 - q) +2q \\
    &= s+|I_1\cup I_2|-1+2|I_1\cap I_2| \\
    &\geq r(F_1 \cup F_2) + r(F_1 \cap F_2).
  \end{align*}
  
 Next, consider the case where $u_2\leq m-t+1<u_1$. (By symmetry, the argument is the same if $u_1$ and $u_2$ are swapped.) One can check that  $u_1+u_2-q>m-t+1$. By \cref{pro:rank-func}, \begin{align*}
    r(F_1) + r(F_2) &= (m+s-t) + (s + u_2 - 1) \\
    &> (m + s-t)+2q\\
    &= r(F_1 \cup F_2) + r(F_1 \cap F_2).
    \end{align*}
    
Finally, consider the case where $u_1,u_2>m-t-1$. We have 
    \[r(F_1) + r(F_2) = 2m+2s -2t,\] which by \cref{tspikeorder}, is at least
    \begin{align*}
    m+3s-t-1
    &> m+s-t+2q\\
    &= r(F_1 \cup F_2) + r(F_1 \cap F_2).
    \end{align*}
    
Thus, in all cases, $(F_1,F_2)$ is not a modular pair. Therefore, we have shown that $\mathcal{M}$ is a modular cut. Now, there is a single-element extension corresponding to the modular cut~$\mathcal{M}$, and this extension satisfies the requirements of the lemma (see, for example, \cite[Theorem~7.2.3]{oxbook}).
\end{proof}

\begin{theorem}
Let $M$ be an $(s,t)$-spike of order $m\geq s+t$. Then $M$ can be constructed from a $(1,1)$-spike of order $m$ by applying \cref{cons:quotient} $t-1$ times, followed by the dual of \cref{cons:quotient} $s-1$ times.
\end{theorem}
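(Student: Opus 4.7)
The plan is to proceed by reverse induction. Given an $(s,t)$-spike $M$ of order $m \geq s+t$, I will exhibit a chain $M = M^{(s,t)}, M^{(s-1,t)}, \ldots, M^{(1,t)}, M^{(1,t-1)}, \ldots, M^{(1,1)}$ of spikes, all of order $m$ on the same ground set and sharing the same arm partition, such that each step is obtained by reversing the appropriate construction: the first $s-1$ steps reverse the dual of \cref{cons:quotient} (descending the first parameter from $s$ to $1$) and the remaining $t-1$ steps reverse \cref{cons:quotient} itself (descending the second parameter from $t$ to $1$). Since a $(1,1)$-spike of order $m$ exists (for example, $\bigoplus_{i=1}^{m} U_{1,2}$), reading the chain forwards will yield the claimed construction. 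By duality, it suffices to establish a single reversal statement for \cref{cons:quotient}.

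This reversal statement reads: given an $(s,t)$-spike $M$ of order $m \geq s+t$ with $t \geq 2$ and associated partition $(A_1,\dotsc,A_m)$, there exist an $(s,t-1)$-spike $M_0$ on the same arms and a single-element extension $M_0+e$ of $M_0$ satisfying the blocking condition of \cref{cons:quotient}, such that $M = (M_0+e)/e$. I plan to construct $M_0$ via duality: let $N = M^*$ (a $(t,s)$-spike), and let $N^+$ be the single-element extension provided by \cref{modcut}, so that $e \in \cl_{N^+}(X)$ if and only if $X$ contains at least $t-1$ arms of $N$. Setting $M_0+e := (N^+)^*$ and $M_0 := (M_0+e) \setminus e$, the identity $(M_0+e)/e = (N^+ \setminus e)^* = N^* = M$ is immediate.

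The bulk of the work will be verifying that $M_0$ is an $(s,t-1)$-spike on arms $A_1,\dotsc,A_m$ and that $M_0+e$ blocks each union of $t-1$ arms of $M_0$. For the cocircuit axiom, for each $(t-1)$-subset $I \subseteq [m]$ I will show that $A_I \cup \{e\}$ is a circuit of $N^+$: dependence holds by \cref{modcut} because $A_I$ contains exactly $t-1$ arms, while minimality holds because removing any element of $A_I$ leaves only $t-2$ complete arms and thus keeps $e$ outside the closure of the resulting independent subset. This simultaneously yields the cocircuits $A_I$ of $M_0$ and the required blocking condition on $M_0+e$. For the circuit axiom, for each $s$-subset $J \subseteq [m]$ I will show that $A_{[m] \setminus J} \cup \{e\}$ is a hyperplane of $N^+$, so that $A_J$ is a cocircuit of $N^+$ and hence a circuit of $M_0$; here \cref{pro:rank-func} applied to $N$ gives $r_N(A_{[m] \setminus J}) = m+t-s-1$, a brief computation based on \cref{l:circuits} confirms that $A_{[m] \setminus J}$ is already a flat of $N$, and then \cref{modcut} places $e$ in $\cl_{N^+}(A_{[m] \setminus J})$.

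The main obstacle will be the rank computation in the circuit verification, since it is precisely the hypothesis $m \geq s+t$ that places $|[m] \setminus J| = m-s$ in the middle range of \cref{pro:rank-func} and makes $r_N(A_{[m]\setminus J})$ come out to $r(N)-1$ rather than something smaller. All subsequent reversals involve parameters whose sum is strictly less than $s+t$, so the order condition $m \geq s+t$ continues to suffice throughout the induction.
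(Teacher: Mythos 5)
Your proof is correct and takes essentially the same route as the paper's: both reverse one construction step at a time via the single-element extension of \cref{modcut} combined with induction and duality. The only difference is which side of the duality you establish directly — the paper applies \cref{modcut} to $M$ and contracts $e$ to obtain the $(s-1,t)$-spike $M'$, whereas you apply it to $N=M^*$ and dualize to obtain the $(s,t-1)$-spike $M_0$ — the same inductive step viewed dually (and your rank/flat computation for $A_{[m]\setminus J}$ is simply the observation, made more directly in the paper, that it is the hyperplane complementary to the cocircuit $A_J$ of $N$).
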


\begin{proof}
For $s=t=1$, the result is clear. Otherwise, by duality, we may assume without loss of generality that $t>1$. By induction and duality, it suffices to show that $M$ can be constructed from an $(s-1,t)$-spike of order $m$ by applying the dual of \cref{cons:quotient} once.

Let $\pi=(A_1,\dotsc,A_m)$ be the associated partition of $M$. Let $M^+$ be the single-element extension of $M$ by an element~$e$ described in \cref{modcut}.

Let $M'=M^+/e$. We claim that $\pi$ is an $(s-1)$-echidna and a $t$-coechidna that partitions the ground set of $M'$.

Let $X$ be the union of any $s-1$ spines of $\pi$.  Then $X$ is independent in $M$, and $X \cup \{e\}$ is a circuit in $M^+$, so $X$ is a circuit in $M'$.
Thus, $\pi$ is an $(s-1)$-echidna of $M'$. Now let $C^*$ be the union of any $t$ spines of $\pi$, and let $H=E(M)-C^*$.  Then $H$ is the union of at least $s-1$ spines, so $e \in \cl_{M^+}(H)$.  Now $H \cup \{e\}$ is a hyperplane in $M^+$, so $C^*$ is a cocircuit in $M^+$ and therefore in $M'$. Hence $\pi$ is a $t$-coechidna of $M'$.

Note that $M'$ is an elementary quotient of $M$, so $M$ is an elementary lift of $M'$ where none of the $2(s-1)$-element circuits of $M'$ are preserved in $M$. So the $(s,t)$-spike $M$ can be obtained from the $(s-1,t)$-spike $M'$ using the dual of \cref{cons:quotient}.
\end{proof}

\section*{Acknowledgements}
Work for this project was begun during a visit, funded by the Heilbronn Institute for Mathematical Research, at the University of Bristol, United Kingdom.

\sloppy

\end{document}